\documentclass[11pt]{amsart}
\usepackage{amsfonts,latexsym,amsthm,amssymb,amsmath,amscd,euscript,tikz, tikz-cd}
\usepackage[alphabetic, msc-links, bibtex-style, nobysame]{amsrefs}
 \usepackage{amsmath} 
 \usepackage{mathtools,slashed}
\usepackage{stackengine}
\usepackage{framed}
\usepackage{xfrac}
\usepackage[makeroom]{cancel}
\usepackage{faktor}
\usepackage{braket}
\usepackage{pgf,tikz,pgfplots}
\usepgfplotslibrary{groupplots}
\pgfplotsset{compat=1.18}
\usepgfplotslibrary{fillbetween} 
\usepackage{mathrsfs}
\usetikzlibrary{arrows}
\definecolor{wrwrwr}{rgb}{0.3803921568627451,0.3803921568627451,0.3803921568627451}
\definecolor{rvwvcq}{rgb}{0.08235294117647059,0.396078431372549,0.7529411764705882}
\definecolor{mblue}{rgb}{0.2, 0.3, 0.8}
\definecolor{morange}{rgb}{1, 0.5, 0}
\definecolor{mgreen}{rgb}{0.1, 0.4, 0.2}
\definecolor{mred}{rgb}{0.5, 0, 0}
\definecolor{ForestGreen}{RGB}{34,139,34}
\usepackage{float}

\numberwithin{equation}{section}

\usepackage{lmodern}
\usepackage{alphabeta}
\usepackage{supertabular}
\usepackage{amssymb}
\usepackage{enumerate}
\usepackage{stmaryrd}
\usepackage{bbm}
\usepackage{mathtools}
\usepackage{setspace}
\usepackage{tikz,tikz-cd}
\usetikzlibrary{matrix,calc,positioning,arrows,decorations.pathreplacing,patterns,knots}

\newtheorem{theorem}{{Theorem}}[section]
\newtheorem*{theorem*}{Theorem}
\newtheorem{lemma}[theorem]{Lemma}
\newtheorem{proposition}[theorem]{Proposition}

\newtheorem{example}[theorem]{Example}
\newtheorem{corollary}[theorem]{Corollary}
\newtheorem*{corollary*}{Corollary}

\theoremstyle{definition}
\newtheorem{definition}{Definition}
\newtheorem{remark}{Remark}

\usepackage{lmodern,url,enumerate,mathtools}
\usepackage[hmargin = 1in,vmargin=1in]{geometry}
\usepackage{graphicx}
\usepackage{subcaption}

\usepackage{hyperref}
    \hypersetup{colorlinks=true,citecolor=ForestGreen,linkcolor = blue,urlcolor =black,linkbordercolor={1 0 0}}

\newcommand{\mr}[1]{{\rm #1}}
\newcommand{\nc}{\newcommand}
\nc{\sn}{\mr{sn}}
\nc{\cn}{\mr{cn}}
\nc{\dn}{\mr{dn}}

\nc{\on}{\operatorname}
\nc{\p}{\partial}
\nc{\ol}{\overline}
\nc{\ul}{\underline}
\nc{\pa}{\partial}

\nc{\pb}{\partial_b}
\nc{\pc}{\partial_c}
\nc{\pd}{\partial_d}
\nc{\pe}{\partial_e}
\nc{\pf}{\partial_f}
\nc{\pg}{\partial_g}
\nc{\ph}{\partial_h}
\nc{\pari}{\partial_i}
\nc{\pj}{\partial_j}
\nc{\pk}{\partial_k}
\nc{\pl}{\partial_l}
\nc{\pell}{\partial_\ell}
\nc{\parm}{\partial_m}
\nc{\pn}{\partial_n}
\nc{\po}{\partial_o}
\nc{\pp}{\partial_p}
\nc{\pq}{\partial_q}
\nc{\pr}{\partial_r}
\nc{\ps}{\partial_s}
\nc{\pt}{\partial_t}
\nc{\pu}{\partial_u}
\nc{\pv}{\partial_v}
\nc{\pw}{\partial_w}
\nc{\px}{\partial_x}
\nc{\py}{\partial_y}
\nc{\pz}{\partial_z}

\numberwithin{equation}{section}
\makeatletter
\@addtoreset{equation}{section}
\makeatother

\title{Spacetime Positive Mass Theorem with boundary for Multiple Time Dimensions} %

\author{Changwen Fang}
\address{Department of Mathematics, Columbia University in the City of New York}
\email{cf2971@columbia.edu}

\begin{document}

\begin{abstract}
We prove a positive mass theorem for one-ended asymptotically flat spin
initial data with multiple time directions and a smooth compact boundary.
Assuming the trace-norm dominant energy condition and pairwise commutativity
of the second fundamental forms, a fixed auxiliary time direction on each
boundary component determines a local chirality condition. We identify the
surviving mixed boundary term and obtain the inequality
$E\geq\|\mathcal P\|_{\mathrm{tr}}$ under the boundary condition
$H+\operatorname{tr}_\Sigma k^T+\|\mathcal Q_T\|_{\mathrm{tr}}\leq0$.
The proof constructs a Dirac--Witten harmonic spinor in an affine energy
space and evaluates the mass identity by approximation. We also establish
the pointwise sharpness of this boundary condition for the fixed projector
and give two-time data with a nonzero mixed boundary term.
\end{abstract}

\maketitle

\tableofcontents

\newpage

\section{Introduction}
The positive mass theorem is one of the fundamental global results in
mathematical relativity.  An asymptotically flat initial data set consists of a
Riemannian manifold $(M^n,g)$ and a symmetric $(0,2)$-tensor $k$, interpreted as
the first and second fundamental forms of a spacelike hypersurface in a
Lorentzian spacetime $(\mathcal{M},\textbf{g})$. From the Einstein constraint equations, the corresponding energy and momentum densities are
\begin{equation}\label{eq:classical-constraints}
  \mu
  =\frac12\left(R_g+(\operatorname{tr}_g k)^2-|k|_g^2\right),
  \qquad
  J=\operatorname{div}_g\left(k-(\operatorname{tr}_g k)g\right).
\end{equation}
 In the spin setting, Witten's Dirac-operator argument gives
the following form of the spacetime positive mass theorem
\cite{Witten1981,DingSpacetimePMT}.
\begin{theorem} [Witten] Let $(M^n, g,k)$ be asymptotically flat spin initial data set satisfying the dominant energy condition
\[
\mu\geq |J|
\]
Then for any asymptotically flat ends, the $ADM$ energy $E$ and linear momentum $P$ satisfy
\[
E\geq |P|.
\]   
\end{theorem}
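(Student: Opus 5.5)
The plan is Witten's spinorial argument, adapted to initial data. Work on the spacetime spinor bundle $S$ restricted to $M$, where Clifford multiplication by the future unit normal $e_0$ is available. Define the Dirac--Witten connection
\[
\widetilde\nabla_i\psi=\nabla_i\psi+\tfrac12 k_{ij}\,e_j e_0\cdot\psi
\]
and the Dirac--Witten operator $\widetilde D=e_i\cdot\widetilde\nabla_i$. The first step is the Schrödinger--Lichnerowicz--Weitzenböck identity
\[
\widetilde D^*\widetilde D=\widetilde\nabla^*\widetilde\nabla+\tfrac12\left(\mu+J_i e_i e_0\right).
\]
It is obtained by expanding $\widetilde D^2$, using $R_g/4$ from the Lichnerowicz formula, and collecting the $k$-terms. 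These assemble into $(\operatorname{tr}k)^2-|k|^2$, which combines with $R_g$ to give $\mu$, and into $\operatorname{div}(k-(\operatorname{tr}k)g)$, which gives $J$. The endomorphism $J_ie_ie_0$ is symmetric with eigenvalues $\pm|J|$, so the dominant energy condition $\mu\ge|J|$ makes the zeroth-order term pointwise nonnegative.

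The second step is to integrate the identity over a large coordinate ball $B_r$ in the chosen end and to assume $\widetilde D\psi=0$. This gives
\[
\int_{B_r}\left(|\widetilde\nabla\psi|^2+\tfrac12\langle(\mu+J_ie_ie_0)\psi,\psi\rangle\right)=\int_{S_r}\langle\widetilde\nabla_\nu\psi+\nu\cdot\widetilde D\psi,\psi\rangle .
\]
Next, write $\psi=\psi_0+\xi$, where $\psi_0$ is constant near infinity with respect to the asymptotic frame and $\xi$ decays. Using the asymptotic flatness decay rates, the boundary integral converges as $r\to\infty$ to
\[
\tfrac{\omega_{n-1}(n-1)}{2}\cdot\left(E|\psi_0|^2+\langle P_ie_ie_0\psi_0,\psi_0\rangle\right)
\]
up to a positive normalization constant. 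The $g$-part gives the ADM energy, exactly as in the Riemannian case, and the $k$-part gives the momentum term.

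The third step, which is the main analytic obstacle, is to solve $\widetilde D\psi=0$ with $\psi\to\psi_0$. We look for $\psi=\psi_0+\xi$ with $\widetilde D\xi=-\widetilde D\psi_0$, where the right-hand side lies in $L^2$ thanks to the decay. The plan is to use the Hilbert space $H$ obtained by completing $C_c^\infty$ spinors in the norm $\|\widetilde\nabla\xi\|_{L^2}$. By the Weitzenböck identity and the dominant energy condition we have $\|\widetilde D\xi\|^2\ge\|\widetilde\nabla\xi\|^2$. A weighted Poincaré/Hardy inequality, valid since $n\ge3$, controls $\|\xi/r\|_{L^2}$ by $\|\nabla\xi\|$, and hence by $\|\widetilde\nabla\xi\|$ up to compact-support perturbations that are handled by the decay of $k$. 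With these, Lax--Milgram applied to the form $\langle\widetilde D\xi,\widetilde D\eta\rangle$ yields $\xi$ with $\widetilde D^*\widetilde D\xi=-\widetilde D^*\widetilde D\psi_0$. Injectivity of $\widetilde D$ on $H$ then gives $\widetilde D\psi=0$ weakly, and elliptic regularity upgrades this to a smooth solution. One must also check that $\xi$ decays well enough that the limit of the boundary term is computed entirely by $\psi_0$; this follows from weighted elliptic estimates.

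Finally, combining the three steps gives
\[
E|\psi_0|^2+\langle P_ie_ie_0\psi_0,\psi_0\rangle\ge0
\]
for every constant spinor $\psi_0$. Choose $\psi_0$ to be an eigenvector of the symmetric endomorphism $P_ie_ie_0$ with eigenvalue $-|P|$; this is possible because $(P_ie_ie_0)^2=|P|^2$. This choice yields $E\ge|P|$. Since the argument is carried out in a single end with the other ends handled by the same decay, it applies to any asymptotically flat end.
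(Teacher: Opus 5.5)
Your outline is the classical Witten/Parker--Taubes argument, and its overall structure is right. The paper only cites this theorem, but its Sections~\ref{sec:boundary-problem}--\ref{sec:positive-mass}, specialized to $m=1$ and $\Sigma=\varnothing$, run the same scheme in first-order form.

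There is a genuine gap in one step: you claim that weighted elliptic estimates make $\xi$ decay fast enough for the limit of $\int_{S_r}\langle\widetilde\nabla_\nu\psi+\nu\cdot\widetilde D\psi,\psi\rangle$ to be computed by $\psi_0$ alone. Decay cannot do this. Since $\xi$ is driven by $\widetilde D\psi_0=O(r^{-\tau-1})$, in general you only get $\xi=O(r^{-\tau})$ and $\nabla\xi=O(r^{-\tau-1})$. The term linear in $\xi$, namely $\int_{S_r}\langle\widetilde\nabla_\nu\xi+\nu\cdot\widetilde D\xi,\psi_0\rangle\,dA$, is then bounded only by $Cr^{n-2-\tau}$, the same order as the mass flux itself. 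In every nontrivial case $\tau\le n-2$ (for $\tau>n-2$ one has $E=P=0$), so this term never tends to zero by size.

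The missing idea is a cancellation. The combination $\nabla_\nu\xi+\nu\cdot D\xi=\sum_A\nu e_A\nabla_{e_A}\xi$ contains only tangential derivatives. Since $\sum_A\nu e_A\nabla_{e_A}+\tfrac12H$ is the self-adjoint Dirac operator of $S_r$, you can move the derivative onto $\psi_0$. What remains has size $|\xi|(|\nabla\psi_0|+|k|)$ and integrates to $O(r^{n-2-2\tau})\to0$. The paper instead avoids pointwise decay entirely. In Lemma~\ref{lem:energy-asymptotics} it approximates the correction term ($\chi$ there, your $\xi$) in the energy norm by compactly supported $\chi_a$, whose flux at infinity is exactly that of $\psi_0$. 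It then passes to the limit in the bulk terms, using $\overline D\psi_a\to0$ and $\overline\nabla\psi_a\to\overline\nabla\psi$ in $L^2$ together with Cauchy--Schwarz for the nonnegative form $\mathbf Q$.

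Several smaller points also need the paper's arguments or a short fix.
\begin{enumerate}
\item After Lax--Milgram you only know that $\phi:=\widetilde D\psi\in L^2$ is orthogonal to $\widetilde D(H)$, i.e.\ $\widetilde D\phi=0$ weakly. Since $\phi$ need not lie in $H$, injectivity on $H$ does not apply directly. Use the cutoff argument from Proposition~\ref{prop:weighted-solvability}: $\phi$ is smooth, $\chi_R\phi\in H$, and $\|\widetilde\nabla(\chi_R\phi)\|^2\le\|\widetilde D(\chi_R\phi)\|^2\le CR^{-2}\int_{R<r<2R}|\phi|^2\to0$, which forces $\phi=0$.
\item The compact-region part of the Poincar\'e inequality for $\|\widetilde\nabla\cdot\|$ is not handled by the decay of $k$. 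It needs Rellich compactness plus uniqueness of $\widetilde\nabla$-parallel transport, as in the proof of \eqref{eq:energy-weight-control}.
\item The identity $(P_ie_ie_0)^2=|P|^2$ alone does not show that $-|P|$ is an eigenvalue. Note that $e_0$ anticommutes with $P_ie_ie_0$ and therefore swaps its eigenspaces.
\end{enumerate}
With these repairs, and with $\psi_0$ chosen to vanish on the other ends, your proof is complete.
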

The proof uses a harmonic spinor with prescribed constant value at infinity.
An integrated Schr\"odinger--Lichnerowicz formula expresses a quadratic form in
$(E,P)$ as a bulk integral.  The dominant energy condition makes that integral
nonnegative, and a suitable choice of the asymptotic spinor yields the stated
inequality.

Hirsch, Payne, and Zhang recently extended this spinorial framework to
generalized initial data with several timelike directions
\cite[Theorem~1.2]{HirschPayneZhang2026}. Such initial data set consist of
\[
  (M^n,g,k^1,\ldots,k^m),\qquad 1\leq m\leq n,
\]
where $k^1,\ldots,k^m$ are symmetric $(0,2)$-tensors.  If $M$ is realized as a
spacelike submanifold of a pseudo-Riemannian manifold of signature $(n,m)$, then these tensors are the components of its vector-valued second fundamental
form.  The generalized constraint quantities are
\begin{equation}\label{eq:generalized-constraints}
  \mu
  =\frac12\left[
    R_g+\sum_{\alpha=1}^m
    \left((\operatorname{tr}_g k^\alpha)^2-|k^\alpha|_g^2\right)
  \right],
  \qquad
  J^\alpha
  =\operatorname{div}_g\left(k^\alpha-(\operatorname{tr}_g k^\alpha)g\right).
\end{equation}
Let $\mathcal J$ be the $m\times n$ matrix whose rows are
$J^1,\ldots,J^m$, and let $\mathcal P$ be the $m\times n$ matrix whose rows
are the ADM momentum vectors $P^1,\ldots,P^m$.  If
$\|\cdot\|_{\mathrm{tr}}$ denotes the trace norm,  namely the sum of the
singular values, then the multiple-time dominant energy condition is
\begin{equation}\label{eq:multiple-time-dec}
  \mu\geq \|\mathcal J\|_{\mathrm{tr}}.
\end{equation}
The main inequality of Hirsch--Payne--Zhang is as follows:
\begin{theorem} [Hirsch--Payne--Zhang] \label{thm:hpz} Let $(M^n, g, k^1,\dots,k^m)$, with $m\leq n$, be an asymptotically flat spin initial data set. Assume that the dominant energy condition
\[
\mu \geq \lVert \mathcal{J} \rVert_{\mathrm{tr}}
\]
holds, and that the \(k^\alpha\), \(\alpha=1,\ldots,m\), pairwise commute
as \(g\)-self-adjoint endomorphisms. Then the energy--momentum satisfies
\[
E \geq \lVert \mathcal{P} \rVert_{\mathrm{tr}}.
\]    
\end{theorem}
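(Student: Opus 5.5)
We prove Theorem~\ref{thm:hpz} by adapting Witten's argument to a spinor bundle that carries a Clifford action of the normal directions of signature $(n,m)$. Let $S$ be the restriction to $M$ of the spinor bundle of $\mathbb R^{n,m}$. Concretely, it is a Hermitian bundle carrying Clifford multiplication by tangent vectors $e_i$ and by $m$ additional anticommuting generators $\epsilon_1,\dots,\epsilon_m$ with $\epsilon_\alpha^2=+1$ (the timelike directions). It carries a positive-definite inner product for which the $e_i$ are skew-adjoint and the $\epsilon_\alpha$ self-adjoint. On $S$ we define the modified connection and Dirac--Witten operator
\[
\widetilde\nabla_i=\nabla_i+\tfrac12\sum_{\alpha}k^\alpha_{ij}\,e_j\epsilon_\alpha,\qquad
\widetilde D=\sum_i e_i\widetilde\nabla_i .
\]

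\textbf{Step 1: Weitzenb\"ock formula.} A direct computation gives
\[
\widetilde D^2=\widetilde\nabla^*\widetilde\nabla+\mathcal R,\qquad
\mathcal R=\tfrac12\Big(\mu+\sum_\alpha J^\alpha_i\, e_i\epsilon_\alpha\Big)+\mathcal E .
\]
The term $\mathcal E$ collects the cross terms $\sum_{\alpha<\beta}[k^\alpha,k^\beta]_{ij}\,e_ie_j\epsilon_\alpha\epsilon_\beta$. This is exactly where pairwise commutativity is used: $\mathcal E$ vanishes when the $k^\alpha$ commute, and otherwise it is an indefinite term. Expanding $\widetilde\nabla^*\widetilde\nabla$, the $(\operatorname{tr}k^\alpha)^2-|k^\alpha|^2$ pieces combine with $R_g/4$ into $\mu/2$.

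\textbf{Step 2: pointwise bound.} We need $\mu+\sum_\alpha J^\alpha_i e_i\epsilon_\alpha\ge 0$ as a self-adjoint endomorphism. Take the singular value decomposition $\mathcal J=U\Sigma V^T$. After rotating the frames $\{\epsilon_\alpha\}$ and $\{e_i\}$ (an $O(m)\times O(n)$ change that preserves the Clifford relations), the operator becomes $\sum_a\sigma_a e_a\epsilon_a$.

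The operators $e_a\epsilon_a$ for distinct $a$ commute, are self-adjoint, and satisfy $(e_a\epsilon_a)^2=-e_a^2\epsilon_a^2=1$. Hence they are simultaneously diagonalizable with eigenvalues $\pm1$. The smallest eigenvalue of the sum is therefore $-\sum_a\sigma_a=-\|\mathcal J\|_{\mathrm{tr}}$, which is attained. So the dominant energy condition \eqref{eq:multiple-time-dec} gives $\mathcal R\ge0$.

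\textbf{Step 3: existence and the mass identity.} Fix a constant spinor $\psi_0$ on the asymptotic end. By the standard Fredholm/Lax--Milgram argument, as in the Witten and Parker--Taubes approach, we solve $\widetilde D\psi=0$ with $\psi-\psi_0\in W^{1,2}_{-q}$. Coercivity follows from the Weitzenb\"ock formula, $\mathcal R\ge0$, and the weighted Poincar\'e inequality.

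Integrating the formula over large balls gives the boundary flux
\[
\lim_{r\to\infty}\int_{S_r}\langle \widetilde\nabla_\nu\psi+\nu\widetilde D\psi,\psi\rangle
=c_n\big\langle \psi_0,(E+\textstyle\sum_{\alpha,i}P^\alpha_i e_i\epsilon_\alpha)\psi_0\big\rangle .
\]
The $g$-decay yields $E|\psi_0|^2$ and the $k^\alpha$-decay yields the momentum term. The left side equals $\int_M|\widetilde\nabla\psi|^2+\langle\mathcal R\psi,\psi\rangle\ge0$.

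\textbf{Step 4: choice of $\psi_0$.} Apply the same SVD diagonalization to the constant matrix $\mathcal P$. We pick $\psi_0$ to be a simultaneous eigenvector of the $e_a\epsilon_a$ with eigenvalues $-1$ for every $a$ with $\sigma_a(\mathcal P)>0$. Since $m\le n$, there are at most $m$ such commuting involutions and they admit a common eigenspace in $S$. This yields $E-\|\mathcal P\|_{\mathrm{tr}}\ge0$.

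The main obstacle is Step 1. The bookkeeping of cross terms between different $k^\alpha$ and $k^\beta$ must be done carefully, both to identify the exact commutator term that commutativity kills and to confirm that the sign conventions make $\epsilon_\alpha$ self-adjoint. The analysis in Step 3 is routine once $\mathcal R\ge0$ is in hand.
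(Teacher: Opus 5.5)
Your outline follows the same route as the paper, whose main theorem with $\partial M=\varnothing$ is this statement. The ingredients match: the Witten identity of Proposition~\ref{prop:witten-identity}, where the only mixed term is the commutator $[k^\alpha,k^\beta]_{j\ell}e_je_\ell\epsilon_\alpha\epsilon_\beta$; the SVD trace-norm bound of Lemma~\ref{lem:trace-norm-clifford}; an energy-class Witten spinor; the mass identity; and an SVD choice of $\psi_0$. Step~1 is correct, and so is the lower bound in Step~2 that gives $\mathcal R\ge0$.

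\textbf{The gap is in Step~4.} This is exactly the step that produces the trace norm. You need
\[
\lambda_{\min}\Bigl(\sum_{\alpha,i}P^\alpha_ie_i\epsilon_\alpha\Bigr)=-\|\mathcal P\|_{\mathrm{tr}},
\]
that is, the joint eigenspace of the commuting involutions $G_a=\widehat e_a\widehat\epsilon_a$ with \emph{all} eigenvalues $-1$ must be nonzero. Your reason (``at most $m$ commuting involutions \dots admit a common eigenspace'') only produces a joint eigenvector with \emph{some} sign pattern, a priori possibly $(+1,\dots,+1)$. Simultaneous diagonalization gives the bound $\ge-\sum_a\sigma_a$, but not attainment; the same unproved attainment is also asserted in Step~2.

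The missing idea, used in Lemma~\ref{lem:asymptotic-spinor}, is a sign-flipping operator. The invertible operator $\widehat e_a$ anticommutes with $G_a$ and commutes with $G_b$ for $b\neq a$. Hence it maps the joint eigenspace with sign vector $(\varepsilon_1,\dots,\varepsilon_a,\dots)$ isomorphically onto the one with $\varepsilon_a$ flipped. Starting from any nonzero joint eigenspace, you reach the all-negative one, so every joint eigenspace is nonzero. The hypothesis $m\le n$ plays no role in this.

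\textbf{A smaller point in Step~3.} The spinor $\psi-\psi_0$ lies only in the critical energy space $W^{1,2}_{-q}$, $q=(n-2)/2$, which gives no pointwise decay. So you cannot simply assert that the sphere flux of $\psi$ converges to that of $\psi_0$. The paper proves the mass identity by density instead:
\begin{enumerate}
\item Approximate $\chi=\psi-\psi_0$ in the energy norm by compactly supported $\chi_a$.
\item For $R$ beyond $\operatorname{supp}\chi_a$, the flux of $\psi_0+\chi_a$ through $S_R$ tends, as $R\to\infty$, to the ADM functional of $\psi_0$.
\item Pass $a\to\infty$ in the bulk terms, using $|\widetilde D v|^2\le n|\widetilde\nabla v|^2$ and the bound $\int\langle\mathcal Rv,v\rangle\le C\|v\|^2_{\mathcal H}$ from the integrated Weitzenb\"ock identity to control the energy-condition term.
\end{enumerate}
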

As Hirsch, Payne, and Zhang point out, these generalized quantities are defined mathematically, rather than being
derived from a proposed system of Einstein equations with several time
directions.  When $m=1$, they reduce to the usual constraint and ADM
quantities.  Pairwise commutativity is used in their generalized
Schr\"odinger--Lichnerowicz formula to eliminate mixed Clifford-curvature
terms.

Earlier, Chen--Hijazi--Zhang studied the Dirac--Witten operator on
spacelike submanifolds of pseudo-Riemannian manifolds
\cite{ChenHijaziZhang2012}. Their Theorem~4.1 includes generalized
apparent horizons of future or past type, assumes a spin normal bundle of
odd rank, and states a bound by the Euclidean norm of all momentum
components. Their boundary chirality uses the product of the normal time
Clifford generators. The present result instead uses the abstract
Hirsch--Payne--Zhang data and trace-norm energy condition, allows both odd
and even $m\leq n$, and fixes one time direction at each boundary component.
Its additional boundary requirement controls the surviving $T^\perp$
mixed term. These differences in the hypotheses and boundary conditions
preclude identifying the two results. We do not use the earlier proof;
for the algebraic issues in that argument, see
\cite[Remark~3.7]{HirschPayneZhang2026}.

The goal of the present paper is to develop a compact-boundary analogue of
Theorem \ref{thm:hpz}. Recall that compact inner boundaries model black-hole regions in
the ordinary one-time setting.  Let $\Sigma=\partial M$, and let $\nu$ be the
unit normal pointing from $\Sigma$ into $M$ toward the designated
asymptotically flat end. 
For a classical initial data set $(M,g,k)$, we define the outward future null
expansion by
\begin{equation}\label{eq:theta-plus}
  \theta_+=H+\operatorname{tr}_\Sigma k,
  \qquad
  \operatorname{tr}_\Sigma k=\sum_{A=1}^{n-1}k(e_A,e_A),
\end{equation}
where $H$ is the mean curvature of $\Sigma$ with respect to $\nu$ and
$e_1,\ldots,e_{n-1}$ is a local orthonormal frame of $T\Sigma$. Thus,  
\begin{definition}
Given a spacelike surface $\Sigma^{n-1}$, in either a spacetime
$(\mathcal{M},\mathbf{g})$ or an initial data set $(M^n,g,k)$, we say that
$\Sigma$ is
\[
  \begin{array}{c|c}
    \text{outer trapped} & \text{if}\ \theta_+<0,\\
    \text{weakly outer trapped} & \text{if}\ \theta_+\leq0,\\
    \text{marginally outer trapped (a MOTS).}  & \text{if} \ \theta_+=0
  \end{array}
\]
\end{definition}
The spinorial positive mass theorem with an inner black-hole boundary
originates in work of Gibbons, Hawking, Horowitz, and Perry
\cite{GibbonsHawkingHorowitzPerry1983}.  Herzlich supplied a rigorous
treatment of the trapped-boundary argument \cite{Herzlich1998}, and Bartnik
and Chru\'sciel developed the corresponding Fredholm theory for Dirac-type
boundary problems \cite{BartnikChrusciel2005}.  These results give the
following one-time statement; see also \cite{GallowayLee2022,HirschKazarasKhuri2022}
for related boundary formulations and alternative methods.

\begin{theorem}[One-time boundary positive mass theorem]  \label{thm:one-time-boundary}
  Let $(M^n,g,k)$ be an asymptotically flat spin initial data set with smooth
  compact boundary $\Sigma$.  Suppose that the dominant energy condition
  holds and that $\Sigma$ is weakly trapped, ie. 
  \[
    H+\operatorname{tr}_\Sigma k\leq0.
  \]
  Then the ADM energy--momentum of the designated asymptotically flat end
  satisfies
  \[
    E\geq |P|.
  \]
\end{theorem}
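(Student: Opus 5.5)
The plan is the Witten argument with a local chirality boundary condition, following Herzlich and the Fredholm theory of Bartnik--Chru\'sciel. Work on the spacetime spinor bundle $S$ over $M$, restricted from the Lorentzian spin bundle; equivalently, take two copies of the Riemannian spinor bundle with the Clifford action of the timelike normal $e_0$. Use the Dirac--Witten connection
\[
\widetilde\nabla_i\psi=\nabla_i\psi+\tfrac12 k_{ij}e_je_0\psi
\]
and the Dirac--Witten operator $\widetilde D=e_i\widetilde\nabla_i$. Integrating the Schr\"odinger--Lichnerowicz identity $\widetilde D^2=\widetilde\nabla^*\widetilde\nabla+\tfrac12(\mu+J_ie_ie_0)$ over $M$ gives the standard formula
\[
\int_M |\widetilde\nabla\psi|^2+\tfrac12\langle\psi,(\mu+J_ie_ie_0)\psi\rangle-|\widetilde D\psi|^2
=\text{(boundary at infinity)}-\int_\Sigma \langle\psi, (\mathcal D^\Sigma-\tfrac12 H+\dots)\psi\rangle .
\]
If $\psi\to\psi_0$ is constant at infinity, the term at infinity equals $\tfrac{\omega_{n-1}}{2}\,\langle\psi_0,(E+P_ie_ie_0)\psi_0\rangle$, with the normalization in which $\mu+J_ie_ie_0\ge\mu-|J|\ge0$ pointwise.

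On $\Sigma$ impose the chirality condition $\chi\psi=\psi$ with $\chi=e_\nu e_0$. This $\chi$ is self-adjoint, squares to $1$, and anticommutes with the boundary Dirac operator $\mathcal D^\Sigma$. First, chirality kills the term $\langle\psi,\mathcal D^\Sigma\psi\rangle$. Second, writing out the $k$-terms in the boundary integrand, they reduce on $\{\chi\psi=\psi\}$ to $\tfrac12(\operatorname{tr}_\Sigma k)|\psi|^2$ up to sign, plus a term $k(\nu,e_A)\langle\psi,e_Ae_0\psi\rangle$. The latter vanishes because $e_Ae_0$ anticommutes with $\chi$. The boundary contribution then becomes
\[
-\tfrac12\int_\Sigma(H+\operatorname{tr}_\Sigma k)|\psi|^2\ \ge 0
\]
under the weakly trapped hypothesis. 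The sign bookkeeping with $\nu$ pointing into $M$ is the part to check carefully.

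Next comes existence. Fix a constant spinor $\psi_0$ near infinity, extend it by a cutoff, and solve $\widetilde D(\psi_0+\phi)=0$ with $\phi$ in the Hilbert completion $\mathcal H$ of compactly supported smooth spinors satisfying $\chi\phi=\phi$ on $\Sigma$, under the norm $\|\widetilde\nabla\phi\|_{L^2}$. The integrated identity gives the coercive estimate $\|\widetilde D\phi\|^2\ge\|\widetilde\nabla\phi\|^2$ for $\phi\in\mathcal H$, because the bulk and boundary terms are nonnegative and the infinity term vanishes for decaying $\phi$. Combine this with the weighted Sobolev inequality (Hardy inequality) on the asymptotically flat end. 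Lax--Milgram applied to the form $(\widetilde D\phi,\widetilde D\eta)$ then gives $\phi$ with $\widetilde D^*\widetilde D\phi=-\widetilde D^*\widetilde D\psi_0$ weakly. Here $\widetilde D\psi_0\in L^2$ by asymptotic flatness.

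I expect the main obstacle to be the passage from this weak equation to $\widetilde D(\psi_0+\phi)=0$ with the right boundary regularity. This requires showing that the cokernel vanishes, that is, any $L^2$ solution of $\widetilde D^*\eta=0$ with the adjoint boundary condition is zero. The chirality condition is a local elliptic boundary condition, and its adjoint is again chirality, since $e_\nu$ intertwines. Hence Bartnik--Chru\'sciel regularity applies, and the same integral identity forces $\widetilde\nabla\eta=0$ and hence $\eta=0$ by decay. The mass identity is then justified by applying it to compactly supported approximations of $\psi$ in $\mathcal H$, which gives
\[
\langle\psi_0,(E+P_ie_ie_0)\psi_0\rangle\ge0 .
\]
Finally choose $\psi_0$ to be an eigenvector of the self-adjoint involution $P_ie_ie_0/|P|$ with eigenvalue $-1$. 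This yields $E\ge|P|$.
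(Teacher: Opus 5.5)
Your proposal is correct and is essentially the paper's argument specialized to $m=1$, where $\mathcal Q_T$ is empty. It uses the same ingredients: the Dirac--Witten connection; the chirality condition $\nu\eta_T\psi=\psi$, which kills $(D_\Sigma\psi,\psi)$ and the mixed terms $k(\nu,e_A)(e_A\eta\psi,\psi)$; coercivity on the energy space $\mathcal H_T$ together with the weighted Poincar\'e inequality; vanishing of the cokernel via Bartnik--Chru\'sciel regularity and a cutoff argument; the mass identity obtained by compactly supported approximation; and the choice of $\psi^\infty$ in the $-1$ eigenspace of $P_ie_ie_0/|P|$. Your Lax--Milgram step on $(\widetilde D\phi,\widetilde D\eta)$ only repackages the paper's statement that $\overline D:\mathcal H_T\to L^2$ is an isomorphism, and both rest on the same cokernel argument, which you correctly flagged as the main obstacle.
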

For $m>1$, note that the tangential traces
\[
  \tau^\alpha=\operatorname{tr}_\Sigma k^\alpha,
  \qquad \alpha=1,\ldots,m,
\]
form a vector in the auxiliary $m$-dimensional timelike space $\bigl(\mathcal{T},\langle\cdot,\cdot\rangle_{\mathcal{T}}\bigr)
\cong \bigl(\mathbb{R}^m,\delta\bigr)$. In the ordinary $m=1$ Lorentzian case, the time orientation selects a unit time directions 
from two unit time directions, future and past. However, for $m>1$, there is no distinguished future component of the unit time sphere. Therefore, a preferred unit timelike direction must be included in the boundary data if one wants a one-sided null expansion.

Now fix an orthonormal reference basis $\{T_1,\ldots,T_m\}$ of the auxiliary timelike space $\mathcal T$ and consider the generalized second fundamental form as the
$\mathcal{T}$-valued symmetric tensor $k=\sum_{\alpha=1}^{m} k^\alpha\otimes T_\alpha$.  Note that the basis  vectors can subsequently be regarded as constant sections of the trivial bundle $\Sigma\times\mathcal T\to \Sigma$ and then they form a parallel orthonormal frame along $\Sigma$. On each connected component of $\Sigma$, choose a unit vector so that 
\begin{equation}
T=\sum_{\alpha=1}^{m} t^\alpha T_\alpha,
\qquad
\sum_{\alpha=1}^{m}(t^\alpha)^2=1,
\end{equation}
and define 
\begin{equation}
k^T
:=
\langle \textbf{k},T\rangle_{\mathcal{T}}
=
\sum_{\alpha=1}^{m} t^\alpha k^\alpha,
\qquad
\theta_T
:=
H+\operatorname{tr}_{\Sigma}k^T.
\end{equation}
We refer to $\theta_T$ as the outward null expansion relative to $T$.  Thus a
$T$-MOTS satisfies $\theta_T=0$, while a weakly $T$-outer trapped boundary
satisfies $\theta_T\leq0$. But this condition by itself does not yet control the
full multiple-time spinorial boundary term. 

To see that, consider the vector-valued mixed component
$\textbf{k}(\nu,X)=\sum_{\alpha=1}^mk^{\alpha}(\nu,X)T_\alpha\in \mathcal T$ and extend $T$ to an orthonormal basis $\{T=\hat{T}_1,\hat{T}_2,\ldots,\hat{T}_m\}$ of $\mathcal{T}$ via Gram Schmidt. 
We define the map
\begin{equation}
 \mathcal Q_T:T\Sigma\longrightarrow T^\perp,
\qquad
\mathcal Q_T(X)
=
\operatorname{proj}_{T^\perp}\bigl(\textbf{k}(\nu,X)\bigr),
\end{equation}
where $T^\perp$ is the orthogonal complement of $T$ in $\mathcal{T}$. In the adapted frames, we have the 
$(m-1)\times(n-1)$ matrix
\begin{equation}\label{eq:boundary-mixed-matrix}
  (\mathcal Q_T)_{\beta A}
  =k^{\hat{T}_\beta}(\nu,e_A),
  \qquad
  2\leq\beta\leq m,
  \quad 1\leq A\leq n-1.
\end{equation}
Note that the pointwise trace norm  $\|\mathcal Q_T\|_{\mathrm{tr}}$ is independent of the chosen orthonormal basis of $T^\perp$ and local orthonormal frame of $T\Sigma$. 
We introduce the sufficient boundary sign condition

\begin{equation}\label{eq:adapted-trapping-condition}
  \Theta_{\Sigma,T}
  :=H+\operatorname{tr}_\Sigma k^T+\|\mathcal Q_T\|_{\mathrm{tr}}
  \leq0.
\end{equation}

The condition \eqref{eq:adapted-trapping-condition} is dictated by the
boundary Clifford algebra rather than by a proposed causal theory with
several time directions.  The proof uses the local spinor projector
$\nu\eta_T\psi=\psi$ and shows that the only additional boundary
endomorphism is controlled by $\|\mathcal Q_T\|_{\mathrm{tr}}$. Note that the spinor
projector is an analytic boundary condition and when
$m=1$, $\mathcal Q_T$ is empty and
\eqref{eq:adapted-trapping-condition} is exactly the ordinary weakly outer
trapped condition.

Our main result is the following boundary extension of the
Hirsch--Payne--Zhang inequality.
\begin{theorem}[Multiple-time positive mass theorem with boundary]
  \label{thm:main-boundary}
  Let $(M^n,g,k^1,\ldots,k^m)$, with $m\leq n$, be a smooth, connected, one-ended,
  asymptotically flat spin generalized initial data set with smooth compact
  boundary, complete up to the boundary.  Assume the decay and integrability hypotheses of
  Definition~\ref{def:af-data} as well as the dominant energy condition 
  \[
    \mu\geq\|\mathcal J\|_{\mathrm{tr}}
  \]
  and that the $k^\alpha$ commute pairwise as $g$-self-adjoint
  endomorphisms.  Suppose that on each connected component
  $\Sigma\subset\partial M$ there is a fixed unit time direction $T$ for
  which $T$ is constant in the auxiliary flat time bundle and the adapted
  trapping condition
  \[
    H+\operatorname{tr}_\Sigma k^T+\|\mathcal Q_T\|_{\mathrm{tr}}\leq0
  \]
  holds.  Then
  \[
    E\geq\|\mathcal P\|_{\mathrm{tr}}.
  \]
\end{theorem}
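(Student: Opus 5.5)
We follow the Witten--Hirsch--Payne--Zhang Dirac-operator argument and add a boundary condition. We work on the spinor bundle $S$ of $M$ twisted by the spinors of the auxiliary time space, $S\otimes\Delta(\mathcal T)$. Clifford multiplication by the time generators $\eta_\alpha$ (corresponding to $T_\alpha$) anticommutes with tangential Clifford multiplication, and $\eta_\alpha^2=+1$, as in Hirsch--Payne--Zhang. Define the Dirac--Witten connection and operator
\[
\widetilde\nabla_i\psi=\nabla_i\psi+\tfrac12\sum_{\alpha}k^\alpha_{ij}\,e_j\eta_\alpha\psi,\qquad \widetilde D=\sum_i e_i\widetilde\nabla_i .
\]
Using pairwise commutativity, we quote their Schr\"odinger--Lichnerowicz formula
\[
|\widetilde\nabla\psi|^2-|\widetilde D\psi|^2+\langle(\mu+\textstyle\sum_\alpha J^\alpha\eta_\alpha\cdots)\psi,\psi\rangle=\operatorname{div}(\text{boundary current}).
\]
Its zeroth-order term is pointwise $\ge(\mu-\|\mathcal J\|_{\mathrm{tr}})|\psi|^2$, by their singular-value argument. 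Integrating over a large coordinate ball minus the boundary gives two contributions. At infinity the flux tends to a quadratic form $\langle(E+\sum_{\alpha,i}P^\alpha_i e_i\eta_\alpha)\psi_0,\psi_0\rangle$ in the asymptotic constant spinor $\psi_0$, up to a positive dimensional constant. On $\Sigma$ there is a term
\[
-\int_\Sigma\langle \nu\widetilde D^\Sigma\psi,\psi\rangle+\tfrac12\int_\Sigma\langle(H+\textstyle\sum_\alpha\operatorname{tr}_\Sigma k^\alpha\,\nu\eta_\alpha+\sum_{\alpha,A}k^\alpha(\nu,e_A)e_A\eta_\alpha)\psi,\psi\rangle,
\]
with sign conventions to be fixed by $\nu$ pointing into $M$.

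The key boundary computation imposes the local chirality $\nu\eta_T\psi=\psi$ with $\eta_T=\sum t^\alpha\eta_\alpha$. This is a self-adjoint involution because $\nu$ and $\eta_T$ anticommute and each squares to $\pm1$ appropriately. We then check three cases.
\begin{enumerate}
\item The boundary Dirac operator $\nu\cdot D^\Sigma$ anticommutes with $\nu\eta_T$, since $T$ is parallel along $\Sigma$. Hence $\langle \nu D^\Sigma\psi,\psi\rangle$ vanishes pointwise on the $+1$ eigenspace.
\item Decompose each Clifford term along $T$ and $T^\perp$. The $T$-components give $\operatorname{tr}_\Sigma k^T\langle\nu\eta_T\psi,\psi\rangle=\operatorname{tr}_\Sigma k^T|\psi|^2$. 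The $\nu\eta_{\hat T_\beta}$ terms with $\beta\ge2$ anticommute with the projector and drop out. The term $k^T(\nu,e_A)e_A\eta_T$ also anticommutes with $\nu\eta_T$ and drops out.
\item What survives is the mixed endomorphism $\mathcal B=\sum_{\beta\ge2,A}(\mathcal Q_T)_{\beta A}e_A\eta_{\hat T_\beta}$, which commutes with $\nu\eta_T$. Taking a singular value decomposition of $\mathcal Q_T$ by rotating the frames of $T\Sigma$ and $T^\perp$ gives $\mathcal B=\sum_j\sigma_j e_j\hat\eta_j$, where each $e_j\hat\eta_j$ is a self-adjoint involution. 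The triangle inequality then yields $|\langle\mathcal B\psi,\psi\rangle|\le\|\mathcal Q_T\|_{\mathrm{tr}}|\psi|^2$.
\end{enumerate}
With the right orientation, the boundary integrand is therefore bounded by $\tfrac12\Theta_{\Sigma,T}|\psi|^2\le0$, so the boundary term has the favorable sign.

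Existence is a separate step. We must solve $\widetilde D\psi=0$ with $\psi\to\psi_0$ at infinity and $\nu\eta_T\psi=\psi$ on each component of $\Sigma$. Write $\psi=\psi_0'+\xi$, where $\psi_0'$ is a cutoff constant spinor chosen to satisfy the boundary condition (it is supported away from $\Sigma$). We then seek $\xi$ in the completion $\mathcal H$ of compactly supported smooth spinors satisfying the chiral condition, under the norm $\|\widetilde\nabla\xi\|_{L^2}$. The integrated formula, together with the nonnegative bulk and boundary terms, gives coercivity $\|\widetilde\nabla\xi\|^2\le\|\widetilde D\xi\|^2$. Combined with the weighted Hardy/Poincar\'e inequality on asymptotically flat manifolds, this makes $\|\widetilde\nabla\cdot\|$ a norm equivalent to the weighted $H^1$ norm. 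Lax--Milgram applied to $\widetilde D^*\widetilde D$, or a Riesz argument solving $\widetilde D\xi=-\widetilde D\psi_0'$ with $\widetilde D\psi_0'\in L^2$ by the decay assumptions of Definition~\ref{def:af-data}, yields $\xi$. Elliptic regularity for this local boundary condition follows as in Bartnik--Chru\'sciel, since the condition is of chirality type and satisfies Lopatinski--Shapiro. Evaluating the mass identity then requires approximating $\xi$ by compactly supported sections in $\mathcal H$ and passing to the limit in each integral, which gives
\[
\langle(E+\textstyle\sum P^\alpha_i e_i\eta_\alpha)\psi_0,\psi_0\rangle\ge0
\]
for all constant $\psi_0$.

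The final step is algebraic and follows Hirsch--Payne--Zhang. Take a singular value decomposition $\mathcal P=U\,\mathrm{diag}(s_j)V^T$, rotate the frames at infinity, and choose $\psi_0$ as a simultaneous $-1$ eigenspinor of the commuting involutions $\tilde e_j\tilde\eta_j$. This gives $E-\sum_j s_j\ge0$, that is, $E\ge\|\mathcal P\|_{\mathrm{tr}}$. Such a $\psi_0$ exists because the involutions commute and the twisted representation has enough room. We expect the main obstacle to be the boundary algebra, specifically two points: checking that the signs and anticommutation relations make exactly $\mathcal B$ survive, and confirming that the projector is compatible with self-adjointness, so that the boundary value problem is coercive and the boundary term can legitimately be dropped in the limit. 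The first thing we would do is write out $\nu\eta_T$ commutators explicitly for $m=2$ to fix the signs.
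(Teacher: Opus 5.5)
Your outline is correct and follows the paper's proof essentially step for step. It uses the HPZ enlarged module and modified connection, the commutativity-based Witten identity, and the chirality $\nu\eta_T\psi=\psi$, which kills $(D_\Sigma\psi,\psi)$, the $T$-mixed terms and the $T^\perp$-trace terms, leaving only $\mathcal A_T$, bounded by $\|\mathcal Q_T\|_{\mathrm{tr}}$ via the SVD; then come a coercive energy-space solution with Bartnik--Chru\'sciel regularity, approximation to evaluate the mass identity, and the SVD choice of $\psi^\infty$.

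The one step you compress is existence: Lax--Milgram or Riesz on $\mathcal H$ only makes $\widetilde D(\psi_0'+\xi)$ orthogonal to $\widetilde D(\mathcal H)$, and the paper closes this by showing the cokernel is trivial (the Green-adjoint condition is the same chirality, Bartnik--Chru\'sciel weak-to-strong regularity applies, and a cutoff plus coercivity finishes). Two smaller points: take the module to be HPZ's $S^{\oplus 2^m}$, since $S\otimes\Delta(\mathcal T)$ with $\Delta$ irreducible is too small when $n$ and $m$ are both odd; and the joint $-1$ eigenspace is nonzero because $\hat e_s$ flips the sign of $G_s$ while commuting with every other $G_t$, which is the actual content of your phrase ``enough room''.
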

If $\partial M=\varnothing$, then it specializes, under the stronger one-ended regularity and integrability hypotheses imposed here, to the inequality of
\cite[Theorem~1.2]{HirschPayneZhang2026}; and if $m=1$, then it reduces to the
one-time weakly outer trapped boundary theorem.

We briefly describe the proof strategy.  On the enlarged spinor bundle of
Hirsch--Payne--Zhang, the weighted
boundary theory of Bartnik--Chruściel produces a Dirac--Witten harmonic
spinor in the affine energy class determined by a constant spinor
$\psi_{\infty}$, subject to the $T$-chirality condition at $\Sigma$.
The generalized Schrödinger--Lichnerowicz identity, using the pairwise
commutativity of the $k^\alpha$, has a nonnegative bulk term by the
dominant energy condition. The adapted trapping condition makes the
compact-boundary term nonnegative, with the surviving
$T^\perp$-mixed contribution controlled by
$\lVert \mathcal Q_T\rVert_{\mathrm{tr}}$. Evaluating the mass functional at
infinity and choosing $\psi_{\infty}\neq 0$ by a singular value
decomposition of $\mathcal{P}$ then gives
\[
\bigl(E-\lVert\mathcal{P}\rVert_{\mathrm{tr}}\bigr)
N(\psi_{\infty})\geq 0,
\]
and hence
\[
E\geq \lVert\mathcal{P}\rVert_{\mathrm{tr}}.
\]
The selection of a single direction $T$ is essential because the chirality
involutions associated with distinct orthogonal time directions
anticommute.

The paper is organized as follows.  Section~\ref{sec:preliminaries} fixes the
asymptotic, weighted-space, and Clifford conventions.  Section~\ref{sec:witten-identity}
proves the generalized Witten identity.  Sections~\ref{sec:boundary-geometry}
and~\ref{sec:boundary-current} explain Lee's local boundary projector \cite[Theorem~8.29]{LeeGeometricRelativity2023} and
compute the compact-boundary term.  Section~\ref{sec:boundary-problem}
checks the hypotheses of the weighted elliptic boundary theory.  Finally,
Section~\ref{sec:positive-mass} constructs the Witten spinor, evaluates the
mass boundary functional, and proves the theorem.
\section*{Acknowledgements} The author would like to thank Sven Hirsch for suggesting this problem and for his guidance throughout the development of this work.

\section{Preliminaries}\label{sec:preliminaries}
Throughout, $n\geq3$, $1\leq m\leq n$, and
$(M^n,g,k^1,\ldots,k^m)$ is a spin generalized initial data set with smooth compact boundary $\partial M=\Sigma$. We assume that $M$ is connected and complete up to this boundary: the length metric on $M$ including $\Sigma$ is complete. Spatial
indices $i,j,\ell$ range from $1$ to $n$, boundary indices $A,B$ from $1$ to $n-1$, and time indices
$\alpha,\beta$ range from $1$ to $m$.  Repeated indices are summed. We take the real part of the Hermitian spinor pairing and denote it by $(,)$
so that all pointwise quadratic expressions below are real.

\subsection{Asymptotic and weighted-space conventions}

\begin{definition}[Asymptotically flat data]\label{def:af-data}
  We say that $(M^n,g,k^1,\ldots,k^m)$ is \emph{asymptotically flat of rate $\tau$} if there is a compact $K\subset M$ such that $M\setminus K$ is diffeomorphic to $\mathbb{R}^n\setminus B_{R}$, In the resulting asymptotic coordinate $x=(x^1,\dots x^n)$ with $r=|x|$, 
  \begin{equation}\label{eq:af-decay}
    g_{ij}-\delta_{ij}=O_{2,\gamma}(r^{-\tau}),
    \qquad
    k^\alpha_{ij}=O_{1,\gamma}(r^{-\tau-1}),
  \end{equation}
where $0<\gamma<1$ and $\tau>(n-2)/2$.
\end{definition}
Recall that we define the energy density $\mu$ and momentum densities $J^\alpha$ as 
\begin{equation}\label{eq:constraint-definitions}
  \mu=\frac12\left[
    R_g+\sum_{\alpha=1}^m
    \big((\operatorname{tr}_gk^\alpha)^2-|k^\alpha|_g^2\big)
  \right],
  \qquad
  J^\alpha=\operatorname{div}_g\!\left(
    k^\alpha-(\operatorname{tr}_gk^\alpha)g
  \right).
\end{equation}
Let $\mathcal J=(J_i^\alpha)$ be the $m\times n$ matrix field and assume
the multiple-time dominant energy condition holds 
\begin{equation}\label{eq:dec-prelim}
  \mu\geq\|\mathcal J\|_{\mathrm{tr}}.
\end{equation}
In additon, we assume that $\mu, |J^\alpha|_g\in L^1(M)$ for every $\alpha$. The ADM energy and momentum matrices are normalized by
\begin{align}
 E&=\frac{1}{2(n-1)\omega_{n-1}}
   \lim_{r\to\infty}\int_{S_r}
   (\partial_i g_{ij}-\partial_jg_{ii})\nu_r^j\,dA,
   \label{eq:adm-energy-prelim}\\
 P^\alpha_i&=\frac{1}{(n-1)\omega_{n-1}}
   \lim_{r\to\infty}\int_{S_r}
   \big(k^\alpha_{ij}-(\operatorname{tr}_gk^\alpha)g_{ij}\big)
   \nu_r^j\,dA, 
   \label{eq:adm-momentum-prelim}
\end{align}
where the matrix with rows $P^\alpha$ is denoted by $\mathcal P$.

We also record the exact weighted convention needed in Section~\ref{sec:boundary-problem}.
For $s\in\mathbb R$, define
\begin{equation}\label{eq:weighted-L2}
  \|u\|_{L^2_s}^2
  :=\int_M |u|^2(1+r)^{-2s-n}\,dV_g,
  \qquad
  \|u\|_{W^{1,2}_s}^2
  :=\|u\|_{L^2_s}^2+\|\nabla u\|_{L^2_{s-1}}^2.
\end{equation}
Set the critical decay exponent and corresponding weighted index to be
\begin{equation}\label{eq:critical-weight}
  q_*:=\frac{n-2}{2},
  \qquad
  \delta_*:=-q_*=\frac{2-n}{2}.
\end{equation}
Then at this value, 
\begin{equation}\label{eq:critical-weight-expanded}
  \|u\|_{L^2_{\delta_*}}^2=
       \int_M(1+r)^{-2}|u|^2\,dV_g,
  \qquad
  \|\nabla u\|_{L^2_{\delta_*-1}}^2=\|\nabla u\|_{L^2}^2.
\end{equation}
Thus $W^{1,2}_{\delta_*}$ is the critical weighted energy space and this implies no pointwise decay rate.
\subsection{The enlarged Clifford module}
 
Let $S(M^n) \to M$ be the complex spinor bundle of $(M,g)$ and take 
 $\overline S(M^n):=S(M^n)^{2^m}$.
Following \cite[Section~2.2]{HirschPayneZhang2026}, $\overline S(M^n)$ carries
spatial Clifford actions and auxiliary time actions with the corresponding orthonormal basis $\{e_1,\dots,e_n,\eta_{1},\dots\eta_{m}\}$ of $\mathbb{R}^{n,m}$ satisfying the Clifford relations
\begin{equation}\label{eq:clifford-relations-prelim}
 \begin{split}
  e_ie_j+e_je_i&=-2\delta_{ij},\\
  \eta_\alpha\eta_\beta+\eta_\beta\eta_\alpha&=2\delta_{\alpha\beta},\\
  e_i\eta_\alpha+\eta_\alpha e_i&=0,
 \end{split}
 \qquad
  e_i^*=-e_i,
  \quad
  \eta_\alpha^*=\eta_\alpha.
\end{equation}
The enlarged bundle carries the direct-sum Hermitian product 
\[
\left\langle
(\psi_a)_{a=1}^{2^m},(\phi_a)_{a=1}^{2^m}
\right\rangle
=
\sum_{a=1}^{2^m}\langle \psi_a,\phi_a\rangle.
\]
For a vector $T=\sum_\alpha t^\alpha T_\alpha$ in $\mathcal{T}$, write
$\eta_T:=\sum_\alpha t^\alpha\eta_\alpha$ and for a spinor $\psi\in\Gamma(\overline S(M^n))$, we define the associated scalar field $N(\psi)$ and vector field $X^\alpha(\psi)$ on $M$ by 
\begin{equation}
 N(\psi):=|\psi|^2, \quad  \bigl\langle X_\alpha(\psi),Y\bigr\rangle
:=
\bigl\langle Y\eta_\alpha\psi,\psi\bigr\rangle, 
\end{equation}
for every vectorfield $Y$ on $M$. In a local orthonormal frame, 
\begin{equation}\label{eq:spinor-bilinears-prelim}
  X_i^\alpha(\psi):=(e_i\eta_\alpha\psi,\psi).
\end{equation}

\begin{lemma}\label{lem:trace-norm-clifford}
  For every real $m\times n$ matrix $A=(A_{\alpha i})$ and spinor
  $\psi\in \overline{S}(M^n)_x$, we have the estimate 
  \begin{equation}\label{eq:matrix-clifford-estimate}
  \left|
\left(
\sum_{\alpha=1}^{m}\sum_{i=1}^{n}
A_{\alpha i}e_i\eta_\alpha\psi,\psi
\right)
\right|
\le
\|A\|_{\mathrm{tr}}|\psi|^2, 
  \end{equation}
where 
\[\|A\|_{\mathrm{tr}}=
  \sum_{s=1}^{\min\{m,n\}}\sigma_s(A).\]
In particular,
\[
\left(
\sum_{\alpha=1}^{m}\sum_{i=1}^{n}
A_{\alpha i}e_i\eta_\alpha\psi,\psi
\right)
\ge
-\|A\|_{\mathrm{tr}}|\psi|^2.
\]
  Consequently, the dominant energy condition \eqref{eq:dec-prelim} implies that 
  \begin{equation}\label{eq:bulk-positivity}
    \mu N(\psi)+\sum_\alpha
    \langle J^\alpha,X^\alpha(\psi)\rangle\geq0.
  \end{equation}
\end{lemma}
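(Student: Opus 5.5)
I would prove the estimate by reducing $A$ to a diagonal form using its singular value decomposition, and then showing that the relevant Clifford operators act as commuting self-adjoint involutions. Write
\[
A=U\Sigma V^{\mathsf T}, \qquad U\in O(m),\ V\in O(n),\qquad \Sigma=\operatorname{diag}(\sigma_1,\ldots,\sigma_p),\quad p=\min\{m,n\}.
\]
Define the rotated orthonormal bases
\[
\tilde e_s=\sum_i V_{is}e_i, \qquad \tilde\eta_s=\sum_\alpha U_{\alpha s}\eta_\alpha .
\]
Since $U$ and $V$ are orthogonal, the Clifford relations \eqref{eq:clifford-relations-prelim} continue to hold for $\{\tilde e_s,\tilde\eta_s\}$. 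The relations are quadratic, so they are preserved under orthogonal changes of basis within the spatial and time factors. Bilinearity then gives
\[
\sum_{\alpha,i}A_{\alpha i}e_i\eta_\alpha=\sum_{s=1}^{p}\sigma_s\,\tilde e_s\tilde\eta_s .
\]

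Next I would verify the algebraic properties of $B_s:=\tilde e_s\tilde\eta_s$. Using $e^*=-e$, $\eta^*=\eta$ and anticommutation, we get $B_s^*=\tilde\eta_s^*\tilde e_s^*=-\tilde\eta_s\tilde e_s=\tilde e_s\tilde\eta_s$, so $B_s$ is self-adjoint. Also $B_s^2=\tilde e_s\tilde\eta_s\tilde e_s\tilde\eta_s=-\tilde e_s^2\tilde\eta_s^2=1$. So each $B_s$ is a self-adjoint involution with eigenvalues $\pm1$. This gives $|\langle B_s\psi,\psi\rangle|\le|\psi|^2$, and the same bound holds for the real part $(B_s\psi,\psi)$. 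By the triangle inequality,
\[
\left|\Bigl(\sum_s\sigma_sB_s\psi,\psi\Bigr)\right|\le\sum_s\sigma_s|\psi|^2=\|A\|_{\mathrm{tr}}|\psi|^2 .
\]
One should also check that for $s\neq t$ the operators $B_s$ and $B_t$ commute, which takes two anticommutation swaps. This is not needed for the triangle inequality, but it shows the bound is attained on a common eigenspace. The one-sided inequality in the statement follows immediately from the two-sided one.

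For the consequence, take $A=\mathcal J$, so $A_{\alpha i}=J^\alpha_i$. By definition,
\[
\sum_\alpha\langle J^\alpha,X^\alpha(\psi)\rangle=\sum_{\alpha,i}J^\alpha_i\,(e_i\eta_\alpha\psi,\psi)\ge-\|\mathcal J\|_{\mathrm{tr}}N(\psi).
\]
Adding $\mu N(\psi)$ and applying \eqref{eq:dec-prelim} yields \eqref{eq:bulk-positivity}.

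The only delicate step is the sign bookkeeping in the adjoint computation. One must confirm that $e_i\eta_\alpha$ is self-adjoint rather than skew-adjoint, since the skew case would make the real part vanish and the estimate trivial. With the stated conventions, it is self-adjoint.
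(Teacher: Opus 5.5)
Your proposal is correct and takes the same approach as the paper: an SVD of $A$, orthogonal rotation of the spatial and time Clifford generators, the check that each $\widehat e_s\widehat\eta_s$ is a self-adjoint involution (so $|(G_s\psi,\psi)|\le|\psi|^2$), the triangle inequality, and finally specialization to $A=\mathcal J$. The commutation of the $G_s$ that you mention is not needed here, but the paper uses exactly that fact in Lemma~\ref{lem:asymptotic-spinor} to show the bound is attained.
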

For $A=\mathcal J$, this is precisely the estimate in
\cite[Lemma~3.5, equation~(3.4)]{HirschPayneZhang2026}.  The more
general matrix formulation isolates the Clifford-algebra fact that will also
be used for the compact-boundary matrix $\mathcal Q_T$ and the ADM momentum
matrix $\mathcal P$.

\begin{proof}
Let $r:=\min\{m,n\}$ and choose a singular-value decomposition $A=U^{T}\Lambda V$, with  $U\in O(m)$
$V\in O(n)$, and $\Lambda_{ss}=\sigma_s(A)$. Define the rotated Clifford generators
  \[
    \widehat\eta_s:=\sum_\alpha U_{s\alpha}\eta_\alpha,
    \qquad
    \widehat e_s:=\sum_iV_{si}e_i.
  \]
  Orthogonality of $U,V$ and the Clifford relations (\ref{eq:clifford-relations-prelim}) give
  \[
   \widehat e_s^2=-I,
   \quad \widehat\eta_s^2=I,
   \quad \widehat e_s\widehat\eta_s=-\widehat\eta_s\widehat e_s,
   \quad \widehat e_s^*=-\widehat e_s,
   \quad \widehat\eta_s^*=\widehat\eta_s.
  \]
  Hence $G_s:=\widehat e_s\widehat\eta_s$ satisfies,
  \[
    G_s^*=\widehat\eta_s(-\widehat e_s)
         =-\widehat\eta_s\widehat e_s
         =\widehat e_s\widehat\eta_s=G_s,
    \qquad
    G_s^*G_s=G_s^2=-\widehat e_s^{2}\widehat\eta_s^{2}=I.
  \]
It follows that 
\begin{align*}
|G_s\psi|^2=\langle\psi,G_s^*G_s\psi\rangle= |\psi|^2 \Rightarrow |G_s\psi|=|\psi|
\end{align*}
Then we have 
\begin{align*}
\bigl|(G_s\psi,\psi)\bigr|
&\leq
\bigl|\langle G_s\psi,\psi\rangle\bigr| \\
&\leq
|G_s\psi|\,|\psi| \\
&=
|\psi|^2.
\end{align*}
Writing  \(A_{\alpha i}\) in components, we obtain
\begin{align*}
\left|\left(\sum_{\alpha,i}
A_{\alpha i}e_i\eta_\alpha\psi,
\psi \right)\right|
&=
\left|\left(\sum_{s=1}^{r}\sigma_s(A)
\sum_{\alpha,i}
V_{si}U_{s\alpha}e_i\eta_\alpha\psi,\psi\right)\right| \\
&=
\left|\left(\sum_{s=1}^{r}\sigma_s(A)
\hat{e}_s\hat{\eta}_s\psi,\psi \right)\right| \\
&=
\left|\left(\sum_{s=1}^{r}\sigma_s(A)G_s\psi,\psi \right)\right|\\
&\leq
\sum_{s=1}^{r}
\sigma_s(A)\bigl|(G_s\psi,\psi)\bigr|\\
&\leq
\sum_{s=1}^{r}
\sigma_s(A)|\psi|^2\\
&=\lVert A\rVert_{\mathrm{tr}}|\psi|^2.
\end{align*}

Thus,
\[
\left|
\left(
\sum_{\alpha,i}
A_{\alpha i}e_i\eta_\alpha\psi,\psi
\right)
\right|
\leq
\lVert A\rVert_{\mathrm{tr}}|\psi|^2.
\]

Now to show  \eqref{eq:bulk-positivity}, in an orthonormal frame,
\begin{align*}
\sum_{\alpha=1}^{m}
\langle J^\alpha,X^\alpha(\psi)\rangle
&=
\sum_{\alpha=1}^{m}\sum_{i=1}^{n}
J_i^\alpha X_i^\alpha(\psi) \\
&=
\left(
J_i^\alpha e_i\eta_\alpha\psi,\psi
\right)\\
&\geq
-\lVert J\rVert_{\mathrm{tr}}\,|\psi|^2,
\end{align*}
where we use \eqref{eq:matrix-clifford-estimate} in the last inequality. It follows from the multiple-time dominant energy condition  that 
\begin{align*}
\mu N(\psi)
+
\sum_{\alpha}
\langle J^\alpha,X^\alpha(\psi)\rangle
&\geq
\mu|\psi|^2
-
\lVert J\rVert_{\mathrm{tr}}\,|\psi|^2 \\
&=
\bigl(\mu-\lVert J\rVert_{\mathrm{tr}}\bigr)|\psi|^2\\
&\geq 0.
\end{align*}
\end{proof}
\section{The generalized Witten identity}\label{sec:witten-identity}

Let $\nabla$ be the spin connection and let $D=e_i\nabla_i$ be the ordinary Dirac
operator on $\overline{S}(M^n)$.  Define
\begin{equation}\label{eq:modified-connection-prelim}
  \overline\nabla_i\psi
  :=\nabla_i\psi+\frac12\sum_\alpha k^\alpha_{ij}e_j\eta_\alpha\psi,
  \qquad
  \overline D:=e_i\overline\nabla_i.
\end{equation}
and note that the endomorphisms $\eta_\alpha$ are parallel, ie. $\nabla_i\eta_\alpha=0$. 

Since $k^\alpha$ is symmetric, we have 
\begin{align}
 \overline D
 &=D+\frac12\sum_{\alpha,i,j}k^\alpha_{ij}e_ie_j\eta_\alpha\notag\\
 &=D+\frac14\sum_{\alpha,i,j}k^\alpha_{ij}(e_ie_j+e_je_i)\eta_\alpha\notag\\
 &=D-\frac12\sum_\alpha\operatorname{tr}_gk^\alpha\eta_\alpha.
 \label{eq:modified-dirac-zero-order}
\end{align}
We set  $\pi^\alpha_{ij}:=k^\alpha_{ij}-\operatorname{tr}_gk^\alpha g_{ij}$ and define the Witten current
\begin{equation}\label{eq:corrected-witten-current}
  \mathcal U_i(\psi)
  :=(e_iD\psi+\nabla_i\psi,\psi)
    +\frac12\sum_\alpha
      \pi^\alpha_{ij}(e_j\eta_\alpha\psi,\psi).
\end{equation}
Equivalently, direct use of gives 
\eqref{eq:modified-connection-prelim}--\eqref{eq:modified-dirac-zero-order}
\begin{align*}
e_i\overline D+\overline \nabla_i
&=e_i\left(D-\frac12 \operatorname{tr}_gk^\alpha\eta_\alpha\right)
  +\nabla_i+\frac12k_{ij}^{\alpha}e_j\eta_\alpha \\
&=e_iD+\nabla_i
  +\frac12\bigl(k_{ij}^{\alpha}e_j-\operatorname{tr}_gk^\alpha e_i\bigr)\eta_\alpha\\
&= e_iD+\nabla_i
  +\frac12 \bigl(k_{ij}^{\alpha}-\operatorname{tr}_gk^\alpha\delta_{ij}\bigr)e_j\eta_\alpha\\
&= e_iD+\nabla_i
+\frac12\pi_{ij}^{\alpha}e_j\eta_\alpha. 
\end{align*}
Thus, we have the following normalization 
\begin{equation}\label{eq:current-modified-form}
  \mathcal U_i(\psi)
   =(e_i\overline D\psi+\overline\nabla_i\psi,\psi).
\end{equation}

\begin{proposition}[Generalized Witten identity]\label{prop:witten-identity}
  If the $k^\alpha$, viewed as $g$-self-adjoint endomorphisms, commute
  pairwise, then
  \begin{equation}\label{eq:pointwise-witten-identity}
   \nabla_i\mathcal U_i(\psi)
   =|\overline\nabla\psi|^2-|\overline D\psi|^2
    +\frac12\mu|\psi|^2
    +\frac12\sum_\alpha
      \langle J^\alpha,X^\alpha(\psi)\rangle.
  \end{equation}
\end{proposition}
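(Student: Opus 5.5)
Using \eqref{eq:current-modified-form}, I write $\mathcal U_i(\psi)=(e_i\overline D\psi+\overline\nabla_i\psi,\psi)$, work at a point in a normal orthonormal frame ($\nabla e_i=0$ there), and differentiate term by term. For the connection part I use metric compatibility of the spinor pairing with $\nabla$. The first step is to check how $\overline\nabla$ interacts with the pairing:
\[
\pari(\phi,\psi)=(\overline\nabla_i\phi,\psi)+(\phi,\overline\nabla_i\psi)-\sum_\alpha k^\alpha_{ij}(e_j\eta_\alpha\phi,\psi),
\]
because $e_j\eta_\alpha$ is self-adjoint (as in the proof of Lemma~\ref{lem:trace-norm-clifford}). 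So $\overline\nabla$ is not metric, and the defect is an explicit zeroth-order term.

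With this, $\pari(\overline\nabla_i\psi,\psi)$ gives $|\overline\nabla\psi|^2+(\overline\nabla_i\overline\nabla_i\psi,\psi)$ plus a $k$-defect. Similarly $\pari(e_i\overline D\psi,\psi)$ gives $-|\overline D\psi|^2+(e_i\overline\nabla_i\overline D\psi,\psi)$ plus a defect, after moving $e_i$ across and using $e_i^*=-e_i$. The defect terms are bilinear in $\psi$ and $\overline\nabla\psi$. Using that $k$ is symmetric, that $e_je_i+e_ie_j=-2\delta_{ij}$, and that $\eta_\alpha$ anticommutes with $e_i$, I expect these terms to cancel against the first-order parts of the second-order operators. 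The clean way to organize this is to prove a Lichnerowicz-type formula
\[
\overline D^{\,*}\overline D-\overline\nabla^*\overline\nabla=\tfrac12\mu+\tfrac12 J^\alpha_ie_i\eta_\alpha
\]
with respect to the real pairing, where the adjoints are computed with the modified divergence. Then $\nabla_i\mathcal U_i$ equals $|\overline\nabla\psi|^2-|\overline D\psi|^2$ plus the real pairing of this zeroth-order operator with $\psi$. Since $(\frac12 J^\alpha_i e_i\eta_\alpha\psi,\psi)=\frac12\langle J^\alpha,X^\alpha\rangle$ by \eqref{eq:spinor-bilinears-prelim}, that is exactly \eqref{eq:pointwise-witten-identity}.

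To get the zeroth-order operator, I expand $\overline D^2$ and the modified Laplacian in terms of $D$, $\nabla$ and $K_i:=\frac12k^\alpha_{ij}e_j\eta_\alpha$, and use $D^2=\nabla^*\nabla+\frac R4$. This leaves three kinds of terms.
\begin{enumerate}
\item Derivative terms of the form $(\nabla_i k^\alpha_{jl})e_ie_je_l\eta_\alpha$. Only the antisymmetrized parts survive, and these give $\frac12(\operatorname{div}k^\alpha-d\operatorname{tr}k^\alpha)_j e_j\eta_\alpha=\frac12 J^\alpha_j e_j\eta_\alpha$.
\item Quadratic terms $K_iK_i$ and $(e_iK_i)^2$. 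The diagonal parts ($\alpha=\beta$) use $\eta_\alpha^2=1$ and produce $\frac12\sum_\alpha((\operatorname{tr}k^\alpha)^2-|k^\alpha|^2)$.
\item Mixed quadratic terms with $\alpha\neq\beta$. Since $\eta_\alpha\eta_\beta$ is antisymmetric in $\alpha,\beta$, these reduce to expressions like $\sum_{\alpha<\beta}k^\alpha_{ij}k^\beta_{il}(e_je_l-e_le_j)\eta_\alpha\eta_\beta$, i.e. they are proportional to $[k^\alpha,k^\beta]_{jl}e_je_l\eta_\alpha\eta_\beta$.
\end{enumerate}

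The main obstacle is the bookkeeping of these cross terms and signs. Pairwise commutativity kills the mixed terms, because $k^\alpha k^\beta$ is then symmetric and so vanishes against the antisymmetric $e_je_l-e_le_j$. The traces $\operatorname{tr}k^\alpha\operatorname{tr}k^\beta\eta_\alpha\eta_\beta$ vanish by antisymmetry, since $\eta_\alpha\eta_\beta=-\eta_\beta\eta_\alpha$ for $\alpha\neq\beta$. Any residual terms that are imaginary in the Hermitian pairing drop once we take real parts. I would first carry out the computation with $m=1$ to fix signs against Witten's classical identity, then insert the $\alpha\neq\beta$ terms and verify that they appear only through commutators.
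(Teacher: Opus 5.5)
Your argument is correct, but it is organized differently from the paper's.

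\textbf{The paper's route.} It splits $\mathcal U_i=\mathcal U_i^0+\mathcal U_i^1$ into the classical Witten current, built from the unmodified $\nabla$ and $D$, and the momentum current $\frac12\pi^\alpha_{ij}(e_j\eta_\alpha\psi,\psi)$. It then:
\begin{enumerate}
\item quotes the classical identity for $\nabla_i\mathcal U_i^0$;
\item obtains $\frac12J^\alpha_j(e_j\eta_\alpha\psi,\psi)$ from $\nabla_i\pi^\alpha_{ij}$;
\item expands $|\overline\nabla\psi|^2-|\overline D\psi|^2$ around $|\nabla\psi|^2-|D\psi|^2$, matching the first-order cross terms against $\pi^\alpha_{ij}(e_j\eta_\alpha\nabla_i\psi,\psi)$.
\end{enumerate}

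\textbf{Your route.} You differentiate the modified form \eqref{eq:current-modified-form} and reduce everything to the operator Weitzenb\"ock identity $\overline D^{\,2}-\overline\nabla^*\overline\nabla=\frac12\mu+\frac12J^\alpha_ie_i\eta_\alpha$. This holds with the ordinary formal $L^2$ adjoints:
\begin{itemize}
\item $\overline D$ is formally self-adjoint, because $\Phi=-\frac12\operatorname{tr}_gk^\alpha\eta_\alpha$ is self-adjoint.
\item Your non-metricity defect turns $\overline\nabla_i$ into $\nabla_i-K_i$, where your $K_i$ is the paper's $A_i$. This is exactly $-\overline\nabla^*$, since $K_i^*=K_i$.
\end{itemize}
So $\nabla_i\mathcal U_i=|\overline\nabla\psi|^2-|\overline D\psi|^2+((\overline D^{\,2}-\overline\nabla^*\overline\nabla)\psi,\psi)$.

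A nice feature of your route is that the first-order parts cancel at the operator level. Because $\Phi$ anticommutes with each $e_i$, $\overline D^{\,2}=D^2+e_i(\nabla_i\Phi)+\Phi^2$. Also $\overline\nabla^*\overline\nabla=\nabla^*\nabla-(\nabla_iK_i)+K_iK_i$. The $J$ term then comes from $e_i(\nabla_i\Phi)+(\nabla_iK_i)=\frac12(\operatorname{div}_gk^\alpha-d\operatorname{tr}_gk^\alpha)_je_j\eta_\alpha$, not from the divergence of $\pi$. The quadratic and commutator algebra is the same as the paper's.

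\textbf{What each approach buys.} Yours yields a reusable exact operator identity, so the remark about discarding imaginary residues is unnecessary. The paper's avoids second-order expansions and displays the momentum current explicitly.

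\textbf{One correction to item (2).} The diagonal quadratic terms give $\Phi^2-K_iK_i=\frac14\sum_\alpha\bigl((\operatorname{tr}_gk^\alpha)^2-|k^\alpha|_g^2\bigr)$, not $\frac12\sum_\alpha(\cdots)$. Only with the $\frac14$ does adding $\frac{R_g}{4}$ from $D^2=\nabla^*\nabla+\frac{R_g}{4}$ produce $\frac12\mu$. Your planned $m=1$ calibration against Witten's identity would catch this.
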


\begin{proof}
  Fix $p\in M$ and choose a local orthonormal frame with
  $\nabla_{e_i} e_j=0$ at $p$ and write $\mathcal U_i=\mathcal U_i^0+\mathcal U_i^1$, where 
  \begin{equation}
  \mathcal U_i^0:=\bigl(e_iD\psi+\nabla_i\psi,\psi\bigr), \quad \mathcal U_i^1:=\frac12\pi_{ij}^{\alpha}
\bigl(e_j\eta_\alpha\psi,\psi\bigr)    
  \end{equation}

  For
  $\mathcal U_i^0=(e_iD\psi+\nabla_i\psi,\psi)$, differentiation gives
  \begin{align*}
   \nabla_i\mathcal U_i^0
   &=(e_i\nabla_iD\psi+\nabla_i\nabla_i\psi,\psi)
     +(e_iD\psi+\nabla_i\psi,\nabla_i\psi).
  \end{align*}
  By skew-adjointness of $e_i$, the second term writes 
  \begin{align*}
    (e_iD\psi+\nabla_i\psi,\nabla_i\psi)
      &=-(D\psi,e_i\nabla_i\psi)+\bigl(\nabla_i\psi,\nabla_i\psi\bigr)\\
      &=-|D\psi|^2+|\nabla\psi|^2.
\end{align*}
We also have 
\begin{align*}
(e_i\nabla_iD\psi+\nabla_i\nabla_i\psi,\psi)
&=(D^2\psi-\nabla^*\nabla\psi,\psi)\\
&=\frac14R_g|\psi|^2.
\end{align*}
where we used the Schr\"odinger--Lichnerowicz formula
  $D^2=\nabla^*\nabla+R_g/4$ in the last equality. Thus, combining both expressions above yields
  \begin{equation}\label{eq:ordinary-witten-divergence}
    \nabla_i\mathcal U_i^0
      =|\nabla\psi|^2-|D\psi|^2+\frac14R_g|\psi|^2.
  \end{equation}
  For the momentum part
  $\mathcal U_i^1=\frac12
  \pi^\alpha_{ij}(e_j\eta_\alpha\psi,\psi)$, note that
  $(e_j\eta_\alpha)^*=e_j\eta_\alpha$.  Hence, at the normal-frame point $p$ and using $\nabla_i\eta_\alpha=0$, 
  \begin{align}
   \nabla_i\mathcal U_i^1
   &=\frac12 (\nabla_i\pi^\alpha_{ij})
       (e_j\eta_\alpha\psi,\psi)\notag +\frac12\sum_\alpha\pi^\alpha_{ij}
       \big((e_j\eta_\alpha\nabla_i\psi,\psi)
           +(e_j\eta_\alpha\psi,\nabla_i\psi)\big)\notag\\
   &=\frac12(\nabla_i\pi_{ij}^{\alpha})
 \bigl(e_j\eta_\alpha\psi,\psi\bigr)
+\pi_{ij}^{\alpha}
 \bigl(e_j\eta_\alpha\nabla_i\psi,\psi\bigr) \notag\\
   &=\frac12 J_j^\alpha
       (e_j\eta_\alpha\psi,\psi)
     +\pi^\alpha_{ij}
       (e_j\eta_\alpha\nabla_i\psi,\psi).
   \label{eq:momentum-current-divergence}
  \end{align}
  To identify the second term, write
  \[
    A_i:=\frac12 k^\alpha_{ij}e_j\eta_\alpha,
    \qquad
    \Phi:=-\frac12 \operatorname{tr}_gk^\alpha\eta_\alpha,
  \]
  so $\overline\nabla_i=\nabla_i+A_i$ and
  $\overline D=D+\Phi$.  Direct expansion gives
  \begin{align}
   |\overline\nabla\psi|^2-|\overline D\psi|^2
   &=|\nabla\psi|^2-|D\psi|^2
     +2(A_i\psi,\nabla_i\psi)-2(\Phi\psi,D\psi) +\sum_i|A_i\psi|^2-|\Phi\psi|^2.
   \label{eq:modified-square-expansion}
  \end{align}
  Using $(\eta_\alpha\psi,e_i\nabla_i\psi)
  =-(e_i\eta_\alpha\nabla_i\psi,\psi)$, the first-order part is
  \begin{align}
2(A_i\psi,\nabla_i\psi)-2(\Phi\psi,D\psi)&=k_{ij}^{\alpha}
\bigl(e_j\eta_\alpha\psi,\nabla_i\psi\bigr)+\operatorname{tr}_gk^\alpha\bigl(\eta_\alpha\psi,e_i\nabla_i\psi\bigr)\notag\\
&=k^\alpha_{ij}
          (e_j\eta_\alpha\nabla_i\psi,\psi)
       -\operatorname{tr}_gk^\alpha
          (e_i\eta_\alpha\nabla_i\psi,\psi)\notag\\
&=\bigl(k_{ij}^{\alpha}-\operatorname{tr}_gk^\alpha\delta_{ij}\bigr)
 \bigl(e_j\eta_\alpha\nabla_i\psi,\psi\bigr)\notag\\
&=\pi^\alpha_{ij}
          (e_j\eta_\alpha\nabla_i\psi,\psi).
   \label{eq:first-order-match}
  \end{align}
  Now from the Clifford relations,
  \begin{align}
   \sum_i|A_i\psi|^2=\bigl(A_i^2\psi,\psi\bigr)&=\big(\frac14k_{ij}^{\alpha}k_{i\ell}^{\beta}
 e_j\eta_\alpha e_\ell\eta_\beta\psi,\psi\big)\notag\\
 &=-\frac14k_{ij}^{\alpha}k_{i\ell}^{\beta}
 \bigl(e_je_\ell\eta_\alpha\eta_\beta\psi,\psi\bigr) \notag\\
   &=\frac14\sum_\alpha|k^\alpha|^2|\psi|^2
     -\frac14\sum_{\alpha<\beta}
       \big([k^\alpha,k^\beta]_{j\ell}
       e_je_\ell\eta_\alpha\eta_\beta\psi,\psi\big),
   \label{eq:A-square}
  \end{align}
  where $[k^\alpha,k^\beta]_{j\ell}
   =k^\alpha_{ij}k^\beta_{i\ell}
    -k^\beta_{ij}k^\alpha_{i\ell}=0$ by pairwise commutativity of $k^\alpha$. 
  Similarly, anticommutation of distinct time generators and $\Phi^*=\Phi$ give
  \begin{align*}
    \Phi^*\Phi=\Phi^2
    &=\frac14\sum_{\alpha=1}^m(\operatorname{tr}_gk^\alpha)^2I\\
    &\quad+\frac14\sum_{1\leq\alpha<\beta\leq m}
    \operatorname{tr}_gk^\alpha\operatorname{tr}_gk^\beta
    (\eta_\alpha\eta_\beta+\eta_\beta\eta_\alpha)\\
    &=\frac14\sum_\alpha(\operatorname{tr}_gk^\alpha)^2I.
  \end{align*}
  Pairing with $\psi$ therefore yields
  \begin{equation}\label{eq:Phi-square}
    |\Phi\psi|^2=(\Phi^2\psi,\psi)
    =\frac14\sum_\alpha(\operatorname{tr}_gk^\alpha)^2|\psi|^2.
  \end{equation}
  By pairwise commutativity of of $k^{\alpha}$, the final term of \eqref{eq:A-square}
  vanish.  Substituting \eqref{eq:first-order-match}--\eqref{eq:Phi-square}
  into \eqref{eq:modified-square-expansion}, we can rewrite \eqref{eq:ordinary-witten-divergence} as 
\begin{align} \label{eq:rewrite ordinary-witten-divergence}
\nabla_i\mathcal U_i^0
&=|\overline\nabla\psi|^2-|\overline D\psi|^2-\pi_{ij}^{\alpha}
 \bigl(e_j\eta_\alpha\nabla_i\psi,\psi\bigr)
-\frac14\sum_{\alpha=1}^m
 \bigl(|k^{\alpha}|^2-(\operatorname{tr}_gk^\alpha)^2\bigr)|\psi|^2+\frac14R_g|\psi|^2
\end{align}  
Adding 
  \eqref{eq:momentum-current-divergence} and \eqref{eq:rewrite ordinary-witten-divergence} and by definition of $\mu$ leaves
\begin{align*}
\nabla_i\mathcal U^i(\psi)
&=|\overline\nabla\psi|^2-|\overline D\psi|^2 
+\frac14\left[
 R_g+\sum_{\alpha=1}^m
 \bigl((\operatorname{tr}_gk^\alpha)^2-|k^{\alpha}|^2\bigr)
 \right]|\psi|^2 
+\frac12\sum_{\alpha=1}^m
 \langle J^{\alpha},X^{\alpha}(\psi)\rangle\\
&= |\overline \nabla\psi|^2-|\overline D\psi|^2
+\frac12\mu|\psi|^2
+\frac12\sum_{\alpha=1}^m
 \langle J^{\alpha},X^{\alpha}(\psi)\rangle.
\end{align*}
as desired. 
\end{proof}

\section{Boundary spin geometry and projector}
\label{sec:boundary-geometry}

\subsection{Meaning and ellipticity of the boundary condition}
Choose a local orthonormal frame $e_1,\ldots,e_{n-1}$ for $T\Sigma$ and set
$e_n=\nu$ with the mean-curvature
\begin{equation}\label{eq:H-convention}
  H=\sum_{A=1}^{n-1}g(\nabla_{e_A}\nu,e_A).
\end{equation}
On the restricted spinor bundle define
\begin{equation}\label{eq:boundary-dirac}
  D_\Sigma:=\sum_{A=1}^{n-1}\nu e_A\nabla_{e_A}+\frac12H.
\end{equation}
Equivalently, $D_\Sigma$ is the intrinsic Dirac operator for the induced
boundary spin connection, and the spinorial Gauss formula is
\begin{equation}\label{eq:spinorial-gauss}
  \sum_A\nu e_A\nabla_{e_A}=D_\Sigma-\frac12H.
\end{equation}
In our multiple-time module, define
\begin{equation}\label{eq:boundary-involution}
  \varepsilon_T:=\nu\eta_T,
  \qquad
  P_{T,\pm}:=\frac12(I\pm\varepsilon_T).
\end{equation}
The local spinor boundary condition is
\begin{equation}\label{eq:chirality-boundary-condition}
  P_{T,-}\gamma\psi=0,
  \qquad\text{equivalently}\qquad
  \varepsilon_T\gamma\psi=\gamma\psi,
\end{equation}
where $\gamma:W^{1,2}_{\mathrm{loc}}(M)\to H^{1/2}(\Sigma)$ is the trace
map.  The equation is imposed in $H^{1/2}(\Sigma)$ for an energy solution, and it holds pointwise after boundary regularity. 

\begin{lemma}
On each connected component of $\Sigma$, let $T$ be a unit vector parallel in
the flat auxiliary time bundle, and put
\[
 \varepsilon_T=\nu\eta_T,
 \qquad P_{T,\pm}=\frac12(I\pm\varepsilon_T).
\]
Then $\varepsilon_T$ is a self-adjoint involution, and
\begin{equation}\label{eq:4.6}
 D_\Sigma\varepsilon_T=-\varepsilon_TD_\Sigma.
\end{equation}
The local boundary condition $P_{T,-}\gamma u=0$ is elliptic for $D$ and
$\overline D$. Its Green adjoint boundary condition is again
$P_{T,-}\gamma u=0$.
\end{lemma}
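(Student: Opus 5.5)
The lemma has three parts: the algebraic properties of $\varepsilon_T$, the anticommutation \eqref{eq:4.6}, and ellipticity together with self-adjointness of the boundary condition. I would check them in that order, using only the Clifford relations \eqref{eq:clifford-relations-prelim}, the Gauss formula \eqref{eq:spinorial-gauss}, and the fact that $\eta_T$ is parallel.

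\textbf{Algebraic part.} Since $T$ is a unit vector, $\eta_T^2=\sum_{\alpha,\beta}t^\alpha t^\beta\eta_\alpha\eta_\beta=|T|^2I=I$, and $\nu^2=-I$. The anticommutation $\nu\eta_T=-\eta_T\nu$ then gives
\[
\varepsilon_T^2=\nu\eta_T\nu\eta_T=-\nu^2\eta_T^2=I,
\]
and
\[
\varepsilon_T^*=\eta_T^*\nu^*=-\eta_T\nu=\nu\eta_T=\varepsilon_T.
\]
This is exactly the computation for $G_s$ in Lemma~\ref{lem:trace-norm-clifford}. Hence $P_{T,\pm}$ are complementary orthogonal projections.

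\textbf{Anticommutation.} I would use \eqref{eq:boundary-dirac}, $D_\Sigma=\sum_A\nu e_A\nabla_{e_A}+\tfrac12H$. The obstacle is that $\nabla_{e_A}$ does not commute with $\nu$. It is cleaner to work with the intrinsic boundary connection $\nabla^\Sigma_A=\nabla_A-\tfrac12\nu\,\nabla_A\nu$, for which $\nu$ is parallel. Because $T$ is constant on the component, $\nabla^\Sigma\eta_T=0$ as well, so $\nabla^\Sigma\varepsilon_T=0$. Writing $D_\Sigma=\sum_A\nu e_A\nabla^\Sigma_A$, the anticommutation reduces to the pointwise Clifford identity
\[
(\nu e_A)(\nu\eta_T)=-(\nu\eta_T)(\nu e_A).
\]
To verify it, move factors using $e_A\nu=-\nu e_A$, $e_A\eta_T=-\eta_T e_A$ and $\nu\eta_T=-\eta_T\nu$. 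The left side is $\nu e_A\nu\eta_T=-\nu^2e_A\eta_T=e_A\eta_T$. The right side is $-\nu\eta_T\nu e_A=\nu^2\eta_Te_A=-\eta_Te_A=e_A\eta_T$. So the two sides agree.

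The main care is checking that the $\tfrac12H$ term is absorbed consistently. Equivalently, one can verify directly that $\tfrac12H$ cancels against $\sum_A\nu e_A\cdot\tfrac12\nu\nabla_A\nu$, using $\sum_A e_A\nabla_A\nu=-H$ up to the tangential part and the sign convention \eqref{eq:H-convention}. That sign bookkeeping is the step I expect to be the most error-prone.

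\textbf{Ellipticity.} I would use the standard criterion for Dirac-type operators (as in Bartnik--Chru\'sciel and Lee, Theorem~8.29). A local boundary condition $P\gamma u=0$ is elliptic for $D$ if $P$ restricted to the $+i|\xi|$-eigenspace of the principal symbol $\nu\,\xi$ of the boundary Dirac operator ($\xi\in T^*\Sigma\setminus0$) is an isomorphism onto the range of $P$. Since $\varepsilon_T$ anticommutes with $\nu\xi$ by the computation above, it swaps the $\pm i|\xi|$ eigenspaces. Consequently $E_\xi^{+}\cap\ker P_{T,-}=0$, and the dimensions match. Since $\overline D-D$ has order zero by \eqref{eq:modified-dirac-zero-order}, ellipticity transfers to $\overline D$.

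\textbf{Green adjoint.} The Green formula gives
\[
\int_M(Du,v)-(u,Dv)=\pm\int_\Sigma(\nu u,v),
\]
and the zero-order term $\Phi$ is self-adjoint, so the same boundary form arises for $\overline D$. The adjoint condition is therefore $\{v:(\nu u,v)=0\ \text{for all }u\text{ with }\varepsilon_Tu=u\}$. Since $\varepsilon_T\nu=-\nu\varepsilon_T$, the map $\nu$ sends the $+1$-eigenspace of $\varepsilon_T$ onto the $-1$-eigenspace. Its orthogonal complement is again the $+1$-eigenspace, so the adjoint condition is $P_{T,-}\gamma v=0$.
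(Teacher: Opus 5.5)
Your proposal is correct and follows essentially the same route as the paper. It uses the same Clifford algebra for $\varepsilon_T$ and the same boundary connection: your $\nabla_A-\tfrac12\nu\cdot\nabla_A\nu$ equals the paper's $\nabla_{e_A}+\tfrac12\sum_B h_{AB}e_B\nu$. It also gives the same Lopatinski--Shapiro argument, with $\varepsilon_T$ swapping the decaying and growing eigenspaces, and the same Green-form computation. The $\tfrac12H$ bookkeeping you flagged does close: $\nu e_A\nu=e_A$ and $\sum_A e_A\cdot\nabla_{e_A}\nu=-H$ give $\sum_A\nu e_A\nabla^\Sigma_{e_A}=\sum_A\nu e_A\nabla_{e_A}+\tfrac12H=D_\Sigma$, exactly as the paper checks.
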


\begin{proof}
Since  $T$ has unit length, the Clifford relations give
$\eta_T^2=I$, $\eta_T^*=\eta_T$, $\nu^2=-I$, $\nu^*=-\nu$, and
$\nu\eta_T=-\eta_T\nu$.  Consequently,
\[
 \varepsilon_T^*
 =\eta_T^*\nu^*
 =-\eta_T\nu
 =\nu\eta_T
 =\varepsilon_T,
 \qquad
 \varepsilon_T^2
 =\nu\eta_T\nu\eta_T
 =-\nu^2\eta_T^2=I.
\]
Thus $P_{T,\pm}=\tfrac12(I\pm\varepsilon_T)$ are orthogonal projections:
\[
 P_{T,\pm}^*=P_{T,\pm},\qquad
 P_{T,\pm}^2=P_{T,\pm},\qquad
 P_{T,+}P_{T,-}=0,\qquad P_{T,+}+P_{T,-}=I.
\]
Moreover,
\[
 \varepsilon_T\nu=\eta_T,
 \qquad \nu\varepsilon_T=-\eta_T,
 \qquad \varepsilon_T\nu=-\nu\varepsilon_T.
\]
Therefore $\nu$ maps $S_+$ to $S_-$ and $S_-$ to $S_+$.
Because $\nu^{-1}=-\nu$, these maps are isomorphisms.  The two eigenbundles
have equal rank, each one half of the rank of $\overline S|_\Sigma$.\\
For $(2)$, we write
\[
 h_{AB}:=g(\nabla_{e_A}\nu,e_B),\qquad H=\sum_Ah_{AA},
\]
and boundary spin connection, with exactly the sign in (4.2)--(4.3), is
\[
 \nabla^\Sigma_{e_A}
 :=\nabla_{e_A}+\frac12\sum_Bh_{AB}e_B\nu.
\]
To check that it preserves the normal Clifford action, we compute its induced
connection on endomorphisms:
\begin{align*}
 \nabla^\Sigma_{e_A}\nu
 &=\sum_Bh_{AB}e_B
   +\frac12\sum_Bh_{AB}[e_B\nu,\nu]\\
 &=\sum_Bh_{AB}e_B
   +\frac12\sum_Bh_{AB}(-e_B-e_B)\\
&=0,
\end{align*}
where  $[e_B\nu,\nu]=e_B\nu^2-\nu e_B\nu=-2e_B$.
Since $T$ is parallel in the auxiliary time bundle,
$\nabla\eta_T=0$. Note that the product $e_B\nu$ commutes with $\eta_T$ as 
$\eta_T$ anticommutes with each of its two factors.  Hence, we have 
\[
 \nabla^\Sigma\eta_T=0,
 \qquad \nabla^\Sigma\varepsilon_T=0.
\]
The tangential boundary Clifford action $\nu e_A$ satisfies
\[
 (\nu e_A)\varepsilon_T=e_A\eta_T,
 \qquad
 \varepsilon_T(\nu e_A)=\eta_Te_A=-e_A\eta_T, 
\]
with the corresponding boundary Dirac operator agrees with $D_\Sigma$
\begin{align*}
 \sum_A\nu e_A\nabla^\Sigma_{e_A}
 &=\sum_A\nu e_A\nabla_{e_A}
    +\frac12\sum_{A,B}h_{AB}\nu e_Ae_B\nu\\
 &=\sum_A\nu e_A\nabla_{e_A}+\frac12H\\
 &=D_\Sigma, 
\end{align*}
where the second equality, the off-diagonal terms cancel by
$h_{AB}=h_{BA}$ and $e_Ae_B+e_Be_A=0$ for $A\ne B$, and each diagonal product
$\nu e_A^2\nu$ equals $I$.  Therefore, for every smooth boundary spinor $u$,
\begin{align*}
 D_\Sigma(\varepsilon_Tu)
 &=\sum_A\nu e_A\nabla^\Sigma_{e_A}(\varepsilon_Tu)\\
 &=\sum_A\nu e_A\varepsilon_T\nabla^\Sigma_{e_A}u\\
 &=-\varepsilon_T\sum_A\nu e_A\nabla^\Sigma_{e_A}u\\
&=-\varepsilon_TD_\Sigma u.
\end{align*}
This proves \eqref{eq:4.6}.\\
For the 
ellipticity, 
fix a boundary point, freeze the principal coefficeints, and choose inward normal coordinate $t\ge0$ and tangential coordinates
$y^1,\ldots,y^{n-1}$ such that the principal part of $D$ is
\[
 D_0=\nu\partial_t+\sum_Ae_A\partial_{y^A}.
\]
For a nonzero real tangential covector $\xi=(\xi_A)$, set
$\xi^\sharp\!\cdot:=\sum_A\xi_Ae_A$.  A Fourier mode
$e^{\mathrm i y\cdot\xi}u(t)$ satisfies $D_0u=0$ exactly when
\[
 \nu u'(t)+\mathrm i\sum_A\xi_Ae_Au(t)=0,
 \qquad
 u'(t)+B(\xi)u(t)=0,
 \qquad
 B(\xi):=-\mathrm i\nu\sum_A\xi_Ae_A.
\]
Since $\nu\sum_A\xi_Ae_A$ is skew-adjoint and has square $-|\xi|^2I$,
\[
 B(\xi)^*=B(\xi),\qquad B(\xi)^2=|\xi|^2I.
\]
Thus the solutions decaying as $t\to+\infty$ are
\[
 u(t)=e^{-t|\xi|}u_0,
 \qquad u_0\in C_+(\xi):=\ker(B(\xi)-|\xi|I).
\]
The preceding Clifford calculation gives
$B(\xi)\varepsilon_T=-\varepsilon_TB(\xi)$.  Therefore
$\varepsilon_T$ interchanges the positive and negative eigenspaces of
$B(\xi)$, and each has half the total dimension.  If
$u_0\in C_+(\xi)$ also satisfies the homogeneous boundary condition
$\varepsilon_Tu_0=u_0$, then
\[
 |\xi|u_0=B(\xi)\varepsilon_Tu_0
 =-\varepsilon_TB(\xi)u_0=-|\xi|u_0.
\]
Hence $u_0=0$.  Equivalently, $P_{T,-}:C_+(\xi)\longrightarrow S_-$
is injective, and it is an isomorphism since both spaces have half the
total dimension.  This is Lopatinski-Shapiro complementing condition . \\
Finally,for compactly supported smooth spinors $u,v$, we havae Green's formula at the
compact boundary as 
\[
 \int_M\bigl(\langle Du,v\rangle-\langle u,Dv\rangle\bigr)\,dV_g
 =-\int_\Sigma\langle\nu u,v\rangle\,d\sigma.
\]
The zeroth-order term in $\overline D$ is
self-adjoint, so the same formula holds with $D$ replaced by
$\overline D$. If $\gamma u\in S_+$, then $\nu\gamma u\in S_-$.  Consequently the
boundary form vanishes for all admissible $u$ precisely when
\[
 \gamma v\perp\nu S_+=S_-.
\]
Since $S_-^\perp=S_+$, this is equivalent to
$P_{T,-}\gamma v=0$ as desired. 
\end{proof}

\subsection{The compact-boundary current}\label{sec:boundary-current}

\begin{proposition} \label{prop:boundary-current}
  Suppose $P_{T,-}\gamma\psi=0$.  With $\nu$ pointing toward infinity,
  \begin{equation}\label{eq:boundary-current-formula}
    \mathcal B_{\Sigma,T}(\psi):=\mathcal U_i(\psi)\nu^i
      =-\frac12(H+\operatorname{tr}_{\Sigma}k_T)|\psi|^2
       +\frac12(\mathcal A_T\psi,\psi),
  \end{equation}
  where
  \begin{equation}\label{eq:AT-def}
    \mathcal A_T:=\sum_{\beta=2}^m\sum_{A=1}^{n-1}
      (\mathcal Q_T)_{\beta A}e_A\eta_{T_\beta}.
  \end{equation}
  Furthermore,
  \begin{equation}\label{eq:boundary-current-lower-bound}
   \mathcal B_{\Sigma,T}(\psi)
   \geq-\frac12\left(
      H+\operatorname{tr}_\Sigma k^T+\|\mathcal Q_T\|_{\mathrm{tr}}
   \right)|\psi|^2.
  \end{equation}
\end{proposition}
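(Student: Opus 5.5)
The plan is to compute $\mathcal U_i(\psi)\nu^i$ directly in an adapted frame, with $e_n=\nu$ and $e_1,\ldots,e_{n-1}$ tangent to $\Sigma$. I will use one Clifford principle repeatedly. If $L$ is an endomorphism with $L^*=\pm L$ for which $(L\psi,\psi)$ is real-valued in our real pairing, and $L$ anticommutes with the self-adjoint involution $\varepsilon_T$, then for $\varepsilon_T\psi=\psi$ we get
\[
(L\psi,\psi)=(L\varepsilon_T\psi,\varepsilon_T\psi)=(\varepsilon_TL\varepsilon_T\psi,\psi)=-(L\psi,\psi),
\]
so $(L\psi,\psi)=0$. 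Every term in $\mathcal U_\nu$ will either vanish by this principle or be computed exactly using $\varepsilon_T\psi=\psi$. The projector condition is to be used pointwise, after boundary regularity, or on smooth approximants.

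\emph{Derivative part.} Since $\nu^2=-I$, we have
\[
\nu D\psi+\nabla_\nu\psi=\sum_A\nu e_A\nabla_{e_A}\psi=D_\Sigma\psi-\tfrac12H\psi
\]
by the spinorial Gauss formula \eqref{eq:spinorial-gauss}. The lemma of Section~\ref{sec:boundary-geometry} gives $D_\Sigma\varepsilon_T=-\varepsilon_TD_\Sigma$. The pointwise principle does not apply directly to the differential operator $D_\Sigma$, but the same computation works: $(D_\Sigma\psi,\psi)=(D_\Sigma\varepsilon_T\psi,\varepsilon_T\psi)=-(\varepsilon_TD_\Sigma\psi,\varepsilon_T\psi)=-(D_\Sigma\psi,\psi)$, using that $\varepsilon_T$ is a self-adjoint involution. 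Hence this part contributes exactly $-\tfrac12H|\psi|^2$.

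\emph{Momentum part.} The term is $\tfrac12\sum_\alpha\pi^\alpha_{nj}(e_j\eta_\alpha\psi,\psi)$, and it is invariant under an orthogonal change of the time basis. So I rewrite it in the basis $\hat T_1=T,\hat T_2,\ldots,\hat T_m$. Using $\pi^\alpha_{nn}=k^\alpha_{nn}-\operatorname{tr}_gk^\alpha=-\operatorname{tr}_\Sigma k^\alpha$ and $\pi^\alpha_{nA}=k^\alpha(\nu,e_A)$, four types of terms appear.
\begin{itemize}
\item The term $-\tfrac12\operatorname{tr}_\Sigma k^T(\nu\eta_T\psi,\psi)$ equals $-\tfrac12\operatorname{tr}_\Sigma k^T|\psi|^2$, because $\nu\eta_T\psi=\varepsilon_T\psi=\psi$.
\item For $\beta\ge2$, $\nu\eta_{\hat T_\beta}$ anticommutes with $\varepsilon_T$: $(\nu\eta_\beta)(\nu\eta_T)=\eta_\beta\eta_T$, while $(\nu\eta_T)(\nu\eta_\beta)=\eta_T\eta_\beta=-\eta_\beta\eta_T$. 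So these terms vanish.
\item The term $k^T(\nu,e_A)(e_A\eta_T\psi,\psi)$ vanishes, because $e_A\eta_T$ anticommutes with $\varepsilon_T$: a direct check gives $\varepsilon_Te_A\eta_T=e_A\nu$ and $e_A\eta_T\varepsilon_T=-e_A\nu$.
\item For $\beta\ge2$, $e_A\eta_{\hat T_\beta}$ commutes with $\varepsilon_T$, since each factor anticommutes with both $\nu$ and $\eta_T$. These terms survive and give exactly $\tfrac12(\mathcal A_T\psi,\psi)$, with the coefficients $(\mathcal Q_T)_{\beta A}=k^{\hat T_\beta}(\nu,e_A)$.
\end{itemize}
Adding the derivative and momentum parts yields \eqref{eq:boundary-current-formula}.

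\emph{Lower bound.} For \eqref{eq:boundary-current-lower-bound}, I apply the argument of Lemma~\ref{lem:trace-norm-clifford} to the $(m-1)\times(n-1)$ matrix $\mathcal Q_T$. The generators $e_1,\ldots,e_{n-1}$ and $\eta_{\hat T_2},\ldots,\eta_{\hat T_m}$ satisfy the same Clifford relations, adjointness properties and mutual anticommutation as in that lemma, so the singular value decomposition argument goes through verbatim. It gives $|(\mathcal A_T\psi,\psi)|\le\|\mathcal Q_T\|_{\mathrm{tr}}|\psi|^2$, and the bound follows.

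\emph{Main obstacle.} The hardest part is the sign and commutation bookkeeping that separates the killed terms from the surviving ones. In particular I must check that no cross term in $T$ survives, and that the normal trace term enters with the sign $-\operatorname{tr}_\Sigma k^T$, consistent with $\nu$ pointing toward infinity. I will verify each commutator explicitly from \eqref{eq:clifford-relations-prelim}.
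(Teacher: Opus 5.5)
Your proposal is correct and follows the paper's proof essentially step for step. It reduces the derivative part to $(D_\Sigma\psi,\psi)-\tfrac12H|\psi|^2$ and kills $(D_\Sigma\psi,\psi)$ using $D_\Sigma\varepsilon_T=-\varepsilon_TD_\Sigma$; it expands the momentum part in the adapted time frame with the same (anti)commutation checks against $\varepsilon_T$; and it gets the lower bound by applying the singular-value argument of Lemma~\ref{lem:trace-norm-clifford} to $\mathcal Q_T$.
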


\begin{proof}

Recall that $\pi^{T_\beta}
=
k^{T_\beta}
-
\bigl(\operatorname{tr}_{g}k^{T_\beta}\bigr)g$, and in the adapted time frame, the Witten current may be written as
\[
\mathcal U_i(\psi)
=
\bigl(e_iD\psi+\nabla_i\psi,\psi\bigr)
+
\frac{1}{2}
\sum_{\beta=1}^{m}
\pi_{ij}^{T_\beta}
\bigl(e_j\eta_{T_\beta}\psi,\psi\bigr).
\]
Split the ordinary Dirac operator as 
\[
D
=
\sum_{A=1}^{n-1}e_A\nabla_A+\nu\nabla_\nu.
\]
Consequently, since $\nu^2=-I$, 
\begin{align*}
\nu D\psi+\nabla_\nu\psi
&=
\nu\left(
\sum_A e_A\nabla_A\psi+\nu\nabla_\nu\psi
\right)
+\nabla_\nu\psi \\
&=
\sum_A\nu e_A\nabla_A\psi
+\nu^2\nabla_\nu\psi
+\nabla_\nu\psi\\
&=\sum_A\nu e_A\nabla_A\psi\\
&=
D_\Sigma\psi-\frac{1}{2}H\psi, 
\end{align*}
where the last equality follows by the spinoral Gauss formula. 
Thus, we find
\begin{align} 
\bigl(\nu D\psi+\nabla_\nu\psi,\psi\bigr)
=
\bigl(D_\Sigma\psi,\psi\bigr)
-
\frac{1}{2}H|\psi|^2. \label{eq:normal-dirac-current}
\end{align}
By Lemma 4.1, we have 
\[
D_\Sigma\varepsilon_T
=
-\varepsilon_TD_\Sigma, \quad \varepsilon_T^*=\varepsilon_T, \quad \varepsilon_T^2=I.
\]
Then anticommutation and $\varepsilon_T\psi=\psi$ gives
\begin{align*}
\bigl(D_\Sigma\psi,\psi\bigr)
&=
\bigl(D_\Sigma\varepsilon_T\psi,\varepsilon_T\psi\bigr)\\
&=
\bigl(\varepsilon_TD_\Sigma\varepsilon_T\psi,\psi\bigr)\\
&=\bigl(-\varepsilon_T^2D_\Sigma\psi,\psi\bigr)\\
&=-\bigl(D_\Sigma\psi,\psi\bigr), 
\end{align*}
so $\bigl(D_\Sigma\psi,\psi\bigr)=0$ and thus 
\[
\bigl(\nu D\psi+\nabla_\nu\psi,\psi\bigr)
=
-\frac{1}{2}H|\psi|^2.
\]
Next, we contract the momentum current with \(\nu^i\)
\begin{align*}
\frac{1}{2}
\sum_{\beta}
\pi_{ij}^{T_\beta}\nu^i
\bigl(e_j\eta_{T_\beta}\psi,\psi\bigr)
&=
\frac{1}{2}
\sum_{\beta,A}
\pi^{T_\beta}(\nu,e_A)
\bigl(e_A\eta_{T_\beta}\psi,\psi\bigr)
+
\frac{1}{2}
\sum_{\beta}
\pi^{T_\beta}(\nu,\nu)
\bigl(\nu\eta_{T_\beta}\psi,\psi\bigr).
\end{align*}
We now evaluate the two coefficients in the adapted frame.  For the tangential coefficient, we have
\begin{align*}
\pi^{T_\beta}(\nu,e_A)
&=
k^{T_\beta}(\nu,e_A)
-
\bigl(\operatorname{tr}_{g}k^{T_\beta}\bigr)
g(\nu,e_A) \\
&=
k^{T_\beta}(\nu,e_A).
\end{align*}
For the normal coefficient, we have 
\begin{align*}
\pi^{T_\beta}(\nu,\nu)
&=
k^{T_\beta}(\nu,\nu)
-
\operatorname{tr}_{g}k^{T_\beta}\\
&=k^{T_\beta}(\nu,\nu)
-
\bigl(k^{T_\beta}(\nu,\nu)+\operatorname{tr}_{\Sigma}k^{T_\beta}\bigr)\\
&=-\operatorname{tr}_{\Sigma}k^{T_\beta}.
\end{align*}
Combining the ordinary and momentum parts gives
\begin{align}
\mathcal U_i\nu^i=
\bigl(D_\Sigma\psi,\psi\bigr)
-
\frac{1}{2}H|\psi|^2 
+
\frac{1}{2}
\sum_{\beta,A}
k^{T_\beta}(\nu,e_A)
\bigl(e_A\eta_{T_\beta}\psi,\psi\bigr)-
\frac{1}{2}
\sum_{\beta}
\operatorname{tr}_{\Sigma}k^{T_\beta}
\bigl(\nu\eta_{T_\beta}\psi,\psi\bigr).  \label{eq:boundary-current-before-projection}
\end{align}
If a self-adjoint endomorphism $C$ anticommutes with $\varepsilon_T$, then
  on the $+1$ bundle
  \[
    (C\psi,\psi)
      =(C\varepsilon_T\psi,\varepsilon_T\psi)
      =(\varepsilon_TC\varepsilon_T\psi,\psi)
      =-(C\psi,\psi)=0.
  \]
  Direct use of \eqref{eq:clifford-relations-prelim} shows
  \begin{align*}
   (e_A\eta_T)\varepsilon_T&=-\varepsilon_T(e_A\eta_T),\\
   (\nu\eta_{T_\beta})\varepsilon_T
      &=-\varepsilon_T(\nu\eta_{T_\beta}),\qquad \beta\geq2,\\
   (e_A\eta_{T_\beta})\varepsilon_T
      &=\varepsilon_T(e_A\eta_{T_\beta}),\qquad \beta\geq2.
  \end{align*}
 Thus the $T$-mixed terms and the $T^\perp$ trace terms vanish, ie.  
 \[
(e_A\eta_T\psi,\psi)=0, \quad (\nu\eta_{T_\beta}\psi,\psi)=0,
\qquad
\beta\geq2, 
\]
 whereas the $T^\perp$ mixed terms survive. For \(\beta=1\), since $(\nu\eta_T\psi,\psi)=|\psi|^2$ and 
 \[
(\mathcal A_T\psi,\psi)
=
\sum_{\beta=2}^{m}
\sum_{A=1}^{n-1}
(Q_T)_{\beta A}
\bigl(e_A\eta_{T_\beta}\psi,\psi\bigr).
\]
Therefore, we obtain
\[
B_{\Sigma,T}(\psi)
=
\mathcal U_i\nu^i
=
-\frac{1}{2}(H+\operatorname{tr}_{\Sigma}k_T)|\psi|^2
+
\frac{1}{2}(A_T\psi,\psi).
\]
This proves \eqref{eq:boundary-current-formula}.

By Lemma~2.1, applied in the adapted tangent and auxiliary
time frames, we have 
\[
(\mathcal A_T\psi,\psi)
\geq
-\lVert Q_T\rVert_{\mathrm{tr}}|\psi|^2.
\]
It follows that 
\begin{align*}
B_{\Sigma,T}(\psi)
&=
-\frac{1}{2}(H+\operatorname{tr}_{\Sigma}k_T)|\psi|^2
+
\frac{1}{2}(A_T\psi,\psi)\\
&\geq
-\frac{1}{2}(H+\operatorname{tr}_{\Sigma}k_T)|\psi|^2
-
\frac{1}{2}\lVert Q_T\rVert_{\mathrm{tr}}|\psi|^2 \\
&=
-\frac{1}{2}
\left(
H+\operatorname{tr}_{\Sigma}k_T+\lVert Q_T\rVert_{\mathrm{tr}}
\right)|\psi|^2.
\end{align*}
This is the desired estimate of  \eqref{eq:boundary-current-lower-bound}.
\end{proof}

\begin{corollary} \label{cor:boundary-positive}
  If
  \begin{equation}\label{eq:adapted-trapping-prelim}
    H+\operatorname{tr}_\Sigma k^T+\|\mathcal Q_T\|_{\mathrm{tr}}\leq0,
  \end{equation}
  then $\mathcal B_{\Sigma,T}(\psi)\geq0$ for every spinor satisfying
  $P_{T,-}\gamma\psi=0$.
\end{corollary}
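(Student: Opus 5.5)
This corollary follows from the lower bound \eqref{eq:boundary-current-lower-bound} of Proposition~\ref{prop:boundary-current}, so the plan is a direct substitution. Let $\psi$ satisfy $P_{T,-}\gamma\psi=0$, which is equivalent to $\varepsilon_T\gamma\psi=\gamma\psi$. The proposition then gives, pointwise on each connected component of $\Sigma$ with its fixed direction $T$,
\[
\mathcal B_{\Sigma,T}(\psi)\geq-\frac12\left(H+\operatorname{tr}_\Sigma k^T+\|\mathcal Q_T\|_{\mathrm{tr}}\right)|\psi|^2 .
\]
The hypothesis \eqref{eq:adapted-trapping-prelim} says the parenthesized factor is nonpositive. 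Since $|\psi|^2\geq0$, the right-hand side is nonnegative, and therefore $\mathcal B_{\Sigma,T}(\psi)\geq0$.

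Only one point needs care: in what sense the pointwise identity applies. Proposition~\ref{prop:boundary-current} is a pointwise computation that uses $\varepsilon_T\psi=\psi$ on $\Sigma$ together with $C^1$ regularity of $\psi$ up to the boundary, through $D\psi$ and $\nabla_\nu\psi$.

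For smooth spinors the statement is immediate as above. For the energy-class solutions used later, I would apply the corollary either after boundary regularity, under which the chirality condition holds pointwise as remarked after \eqref{eq:chirality-boundary-condition}, or to smooth approximants that satisfy the same projector condition. In both cases the boundary inequality is used pointwise before integrating over $\Sigma$.

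I expect no real obstacle. The nontrivial inputs were the vanishing of $(D_\Sigma\psi,\psi)$ and the trace-norm control of $(\mathcal A_T\psi,\psi)$ via Lemma~\ref{lem:trace-norm-clifford}, and both are already contained in the proposition.
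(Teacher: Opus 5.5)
Your proposal is correct and follows the paper's route: the paper gives no separate proof and treats the corollary as immediate from the lower bound \eqref{eq:boundary-current-lower-bound} of Proposition~\ref{prop:boundary-current} together with the adapted trapping hypothesis. Your regularity remark also matches how the paper later uses the boundary current. In the proof of Lemma~\ref{lem:energy-asymptotics} it applies the zeroth-order formula to smooth approximants satisfying the projector condition and passes to the limit through $L^2(\Sigma)$ convergence of traces.
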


\begin{corollary}[Compatible directional MOTS]\label{cor:directional-mots}
  If $\mathcal Q_T=0$, then
  \begin{equation}\label{eq:boundary-current-compatible}
    \mathcal B_{\Sigma,T}(\psi)=-\frac12\theta_T|\psi|^2.
  \end{equation}
  Hence a weakly $T$-outer trapped boundary, $\theta_T\leq0$, has the
  favorable sign, while a $T$-MOTS, $\theta_T=0$, makes the boundary current
  vanish.  A $T$-MOTS without $\mathcal Q_T=0$ does not in general control
  the surviving endomorphism $\mathcal A_T$.
\end{corollary}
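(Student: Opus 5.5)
This is a direct specialization of Proposition~\ref{prop:boundary-current}. I will substitute $\mathcal Q_T=0$ into the exact identity \eqref{eq:boundary-current-formula}, rather than into the lower bound \eqref{eq:boundary-current-lower-bound}. By \eqref{eq:AT-def}, the endomorphism $\mathcal A_T=\sum_{\beta\ge2}\sum_A(\mathcal Q_T)_{\beta A}e_A\eta_{T_\beta}$ is linear in the entries of $\mathcal Q_T$. So $\mathcal Q_T=0$ gives $\mathcal A_T=0$ pointwise on $\Sigma$, and for every $\psi$ with $P_{T,-}\gamma\psi=0$ the identity reduces to
\[
\mathcal B_{\Sigma,T}(\psi)=-\tfrac12\bigl(H+\operatorname{tr}_\Sigma k^T\bigr)|\psi|^2=-\tfrac12\theta_T|\psi|^2,
\]
using the definition $\theta_T=H+\operatorname{tr}_\Sigma k^T$. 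This is \eqref{eq:boundary-current-compatible}.

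The sign statements then follow at once. If $\theta_T\le0$, then $-\tfrac12\theta_T|\psi|^2\ge0$. Equivalently, this is Corollary~\ref{cor:boundary-positive} with $\|\mathcal Q_T\|_{\mathrm{tr}}=0$. If $\theta_T=0$, the right-hand side vanishes identically, so the boundary current vanishes for every admissible spinor.

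For the final remark, I would argue by a pointwise example. Suppose $\theta_T=0$ but $\mathcal Q_T\neq0$ at some point. Then $\mathcal B_{\Sigma,T}(\psi)=\tfrac12(\mathcal A_T\psi,\psi)$. Take an SVD of $\mathcal Q_T$ as in the proof of Lemma~\ref{lem:trace-norm-clifford}. Each $G_s=\widehat e_s\widehat\eta_s$ is built from tangential $e_A$ and from $\eta_{T_\beta}$ with $\beta\ge2$. Such an element commutes with $\varepsilon_T=\nu\eta_T$, so it preserves the $+1$ eigenbundle of $\varepsilon_T$. The $G_s$ also commute with each other and are self-adjoint involutions. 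Hence one can pick $\psi$ in the $+1$ eigenbundle of $\varepsilon_T$ that is a common $-1$ eigenvector of all the $G_s$. For this $\psi$,
\[
(\mathcal A_T\psi,\psi)=-\|\mathcal Q_T\|_{\mathrm{tr}}|\psi|^2<0,
\]
so the boundary current has the unfavorable sign.

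The only step needing care is the existence of such a joint eigenvector. It requires the joint $-1$ eigenspace of the $G_s$ to meet the $+1$ eigenbundle of $\varepsilon_T$ nontrivially. I would verify this by a dimension count in the Clifford module of rank $2^m\cdot\operatorname{rank}S$, using that products of the $G_s$ and $\varepsilon_T$ are nontrivial traceless elements. The statement itself only claims lack of control ``in general'', so this example suffices.
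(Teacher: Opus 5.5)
Your proof of \eqref{eq:boundary-current-compatible} and of the two sign statements is what the paper intends. The corollary is read off from the exact identity in Proposition~\ref{prop:boundary-current}, because $\mathcal A_T$ is linear in $\mathcal Q_T$.

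You diverge only in the existence step behind the last sentence. The paper handles that step in Remark~\ref{rem:boundary-sharpness}. There, tangential Clifford multiplication by $\widehat e_s$ commutes with $\varepsilon_T$, so it preserves $S_+$. It also anticommutes with $G_s$ and commutes with $G_t$ for $t\neq s$. Starting from any nonzero joint eigenspace of the commuting involutions $G_s|_{S_+}$, one flips signs one at a time and reaches the all-negative joint eigenspace.

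Your dimension count also works, and the tracelessness you invoke holds for the following reason. Take any nonempty subset of $\{G_1,\ldots,G_r,\varepsilon_T\}$. Up to sign, the product of its elements is a product of an even number $\geq2$ of distinct, mutually orthogonal generators among $\widehat e_s,\nu,\widehat\eta_s,\eta_T$. Any single factor anticommutes with such a product, so its trace vanishes. Since all these projections commute, this gives
\[
\dim\Bigl(S_+\cap\bigcap_{s=1}^r\ker(G_s+I)\Bigr)=\operatorname{tr}\Bigl(\tfrac12(I+\varepsilon_T)\prod_{s=1}^r\tfrac12(I-G_s)\Bigr)=2^{-(r+1)}\operatorname{rank}\overline S>0.
\]
The paper's argument is more elementary and constructive, giving explicit isomorphisms between sign patterns. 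Yours gives the exact multiplicity in one line once tracelessness is checked. Both yield the same pointwise sharpness $\lambda_{\min}(\mathcal A_T|_{S_+})=-\|\mathcal Q_T\|_{\mathrm{tr}}$, which is what justifies the lack-of-control claim.
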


\begin{remark}[Pointwise sharpness for the fixed projector]\label{rem:boundary-sharpness}
At a boundary point, set $S_+=\ker(\varepsilon_T-I)$. Then
\[
 \lambda_{\min}(\mathcal A_T|_{S_+})=-\|\mathcal Q_T\|_{\mathrm{tr}}.
\]
Indeed, a singular value decomposition gives
$\mathcal A_T=\sum_{s=1}^r\sigma_s\widehat e_s\widehat\eta_s$, where
$r=\operatorname{rank}\mathcal Q_T$, the $\widehat e_s$ are orthonormal
and tangential, and the $\widehat\eta_s$ correspond to orthonormal
directions in $T^\perp$. The operators $G_s=\widehat e_s\widehat\eta_s$
are commuting self-adjoint involutions preserving $S_+$. Tangential
multiplication by $\widehat e_s$ also preserves $S_+$, anticommutes with
$G_s$, and commutes with every $G_t$ for $t\ne s$. Moreover, $S_+$ is
nonzero, since the invertible operator $\nu$ interchanges the two
eigenspaces of $\varepsilon_T$. Starting with a nonzero joint eigenspace
in $S_+$ and applying these invertible sign changes produces a nonzero
joint eigenspace with all signs negative. This proves the asserted
minimum; when $r=0$, both sides vanish. Proposition~\ref{prop:boundary-current}
therefore gives the pointwise equivalence
\[
 \mathcal B_{\Sigma,T}(\psi)\geq0\ \text{for every }\psi\in S_+
 \quad\Longleftrightarrow\quad
 H+\operatorname{tr}_\Sigma k^T+\|\mathcal Q_T\|_{\mathrm{tr}}\leq0.
\]
This is an exact criterion for nonnegativity of the boundary current for
the fixed projector, not a necessary condition for the global mass
inequality.
\end{remark}

\begin{example}[Two-time data with a nonzero mixed boundary term]\label{ex:nonzero-mixed-boundary}
Let $n\geq3$, $m=2$, and $M=\{x\in\mathbb R^n:r=|x|\geq1\}$ with
its standard spin structure. Choose constants $b>0$ and
$a>1+nb/(n-2)$, and set
\[
 u(r)=1+ar^{2-n}-br^{1-n},\qquad g=u^{4/(n-2)}\delta.
\]
Since $a>b$, the function $u$ is positive and bounded above and below on
$M$. Thus $g$ is complete up to $\Sigma=\{r=1\}$ and asymptotically flat
of rate $n-2$. The conformal scalar-curvature and mean-curvature formulas
\cite[Exercises~1.34 and~2.14]{LeeGeometricRelativity2023} give
\begin{align*}
 R_g&=-\frac{4(n-1)}{n-2}u^{-(n+2)/(n-2)}\Delta_\delta u
     =\frac{4(n-1)^2b}{n-2}u^{-(n+2)/(n-2)}r^{-n-1}>0,\\
 H|_\Sigma&=(n-1)u(1)^{-2/(n-2)}
       \left(1+\frac{2u'(1)}{(n-2)u(1)}\right)\\
     &=(n-1)u(1)^{-n/(n-2)}
       \left(1-a+\frac{nb}{n-2}\right)<0,
\end{align*}
where $\nu=u(1)^{-2/(n-2)}\partial_r$ points into $M$.
Choose a smooth radial function $f$ equal to one near $r=1$ and zero for
$r\geq2$, and, for $\epsilon>0$, define
\[
 k^1=\epsilon f g,\qquad
 k^2=\epsilon f\bigl(dr\otimes dx^1+dx^1\otimes dr\bigr).
\]
The associated endomorphisms commute, since $(k^1)^\sharp=\epsilon fI$.
On the compact annulus $1\leq r\leq2$, let $c=\min R_g>0$.
There are constants $C_1,C_2$ independent of $\epsilon$ such that
\[
 \mu\geq\frac c2-C_1\epsilon^2,\qquad
 \|\mathcal J\|_{\mathrm{tr}}\leq C_2\epsilon.
\]
Thus the dominant energy condition holds for all sufficiently small
$\epsilon>0$; outside that annulus it follows from $\mu=R_g/2>0$
and $\mathcal J=0$. The tensors $k^\alpha$ are compactly supported,
$R_g=O(r^{-n-1})$, and hence all decay and integrability assumptions hold.
Take $T=T_1$. For $X\in T\Sigma$ one has
\[
 \mathcal Q_T(X)=\epsilon u(1)^{-2/(n-2)}dx^1(X)T_2,
 \qquad \operatorname{tr}_\Sigma k^T=(n-1)\epsilon.
\]
In particular, $\mathcal Q_T\ne0$ away from the two poles of $x^1|_\Sigma$.
Since $H$ is strictly negative and $\|\mathcal Q_T\|_{\mathrm{tr}}=O(\epsilon)$
uniformly on $\Sigma$, the adapted trapping condition also holds for
sufficiently small $\epsilon$. The tensors $k^1,k^2$ are linearly
independent wherever this mixed component is nonzero, so no fixed
rotation of the time directions reduces these data to one nonzero
tensor. This example verifies that the hypotheses allow a genuinely
nonzero mixed boundary term. Its ADM momenta vanish because the tensors
are compactly supported; it does not establish sharpness of the global
mass inequality.
\end{example}

\section{The Dirac--Witten boundary problem}\label{sec:boundary-problem}

Let $\mathcal H$ be the completion of smooth spinors compactly supported in
$\overline M$ in the energy norm
\begin{equation}\label{eq:energy-space}
  \|u\|_{\mathcal H}^2:=\int_M|\overline\nabla u|^2\,dV_g.
\end{equation}
The weighted Poincar\'e estimate makes this a norm and gives
\begin{equation}\label{eq:energy-weight-control}
  \int_M(1+r)^{-2}|u|^2\,dV_g
  \leq C\int_M|\overline\nabla u|^2\,dV_g.
\end{equation}
Consequently, on the end, $\mathcal H$ is norm-equivalent to
$W^{1,2}_{\delta_*}$ with $\delta_*=(2-n)/2$.  On compact sets its elements
are locally $H^1$, and their traces lie in $H^{1/2}(\Sigma)$.  Define
 \begin{align}\label{eq:energy-boundary-space}
  \mathcal H_T
  &:= \overline{\{u\in C_c^\infty(\overline M,\mathbb S):
             P_{T,-}\gamma u=0\}}^{\,\mathcal H}\notag \\
  &=\{u\in\mathcal H:P_{T,-}\gamma u=0
       \text{ in }P_{T,-}H^{1/2}(\Sigma)\}.
 \end{align}
For completeness, the second line really is the closure appearing in the
first.  Trace continuity gives the inclusion from left to right.  Conversely,
let $u$ lie in the trace kernel on the right.  First cut off $u$ on the
asymptotically flat end; the weighted Poincar\'e inequality and the usual
annular cutoff estimate give convergence in $\mathcal H$.  Approximate the
cutoff spinor by smooth compactly supported spinors in $H^1$ on a compact
manifold with collar.  Their $P_{T,-}$ traces converge to zero in
$H^{1/2}(\Sigma)$.  A bounded right inverse for the $H^1$ trace map, supported
in that collar, lifts and subtracts these small projected traces.  After a
final smoothing within the smooth subbundle $\operatorname{ran}P_{T,-}$, this
produces smooth compactly supported approximants satisfying
$P_{T,-}\gamma u_j=0$. 
We write $H_*^{1/2}(\Sigma)$ for their spectral graph trace space.  For the
present smooth compact boundary and self-adjoint tangential Dirac operator,
it equals $H^{1/2}(\Sigma)$ as a set, with an equivalent norm.
\begin{proposition}[Energy-class Dirac–Witten boundary isomorphism] \label{prop:weighted-solvability}
Assume that the multiple-time dominant energy condition \eqref{eq:dec-prelim},
pairwise commutativity of the \(k^\alpha\), and the adapted trapping
condition \eqref{eq:adapted-trapping-prelim}
\[
H+\operatorname{tr}_{\Sigma}k_T
+\lVert Q_T\rVert_{\mathrm{tr}}
\leq 0.
\]
holds. On each connected component of \(\Sigma\), let \(T\) be parallel in the
auxiliary flat time bundle, and let
\[
P_{T,-}
=
\frac{1}{2}\bigl(I-\varepsilon_T\bigr),
\qquad
\varepsilon_T=\nu\eta_T.
\]
Then
\[
 \overline D:\mathcal H_T\longrightarrow L^2(M,\overline S)
\]
is a bounded isomorphism. In particular, for every $f\in L^2(M,\overline S)$
there is a unique $u\in\mathcal H_T$ satisfying
\[
 \overline Du=f,\qquad P_{T,-}\gamma u=0,
 \qquad \|u\|_{\mathcal H}\leq\|f\|_{L^2}.
\]
The solution satisfies $\overline\nabla u\in L^2$ and
$(1+r)^{-1}u\in L^2$ which lies in $H^1_{\mathrm{loc}}$ up to $\Sigma$. 
If $f$ is smooth locally up to $\Sigma$, then so is $u$.
\end{proposition}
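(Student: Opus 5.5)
The plan is a Lax--Milgram/coercivity argument built on the integrated generalized Witten identity. First I will establish the key a priori estimate on the dense class of smooth compactly supported spinors $u$ with $P_{T,-}\gamma u=0$. I integrate Proposition~\ref{prop:witten-identity} over $M$. Since $u$ has compact support, the only boundary contribution comes from $\Sigma$. With $\nu$ pointing into $M$, the divergence theorem gives
\[
\int_M\bigl(|\overline\nabla u|^2-|\overline D u|^2\bigr)\,dV_g+\frac12\int_M\Bigl(\mu|u|^2+\sum_\alpha\langle J^\alpha,X^\alpha(u)\rangle\Bigr)dV_g=-\int_\Sigma\mathcal B_{\Sigma,T}(u)\,d\sigma.
\]
Lemma~\ref{lem:trace-norm-clifford} makes the bulk energy term nonnegative. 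Corollary~\ref{cor:boundary-positive} makes $\mathcal B_{\Sigma,T}(u)\ge0$, so the right-hand side is $\le0$. Hence
\[
\|u\|_{\mathcal H}^2=\int_M|\overline\nabla u|^2\le\int_M|\overline Du|^2.
\]
By density, this inequality extends to all of $\mathcal H_T$. The trace and the bilinear terms are continuous in the $\mathcal H$ topology, using \eqref{eq:energy-weight-control} and the local $H^1$ trace theorem. On the end, the $L^1$ hypotheses on $\mu,J$ are not even needed, because the integrand of the bulk term is already signed.

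This estimate gives injectivity and closed range. Boundedness of $\overline D:\mathcal H_T\to L^2$ follows from $|\overline Du|\le\sqrt n\,|\overline\nabla u|$. For surjectivity I use the adjoint. Suppose $v\in L^2$ is orthogonal to $\overline D(\mathcal H_T)$. Then $v$ is a weak solution of $\overline D v=0$, since $\overline D$ is formally self-adjoint because its zeroth-order term is self-adjoint. By the Green formula computed in Lemma~4.1, the boundary condition on $v$ is the Green adjoint condition, which is again $P_{T,-}\gamma v=0$. To make this rigorous, I will invoke the elliptic boundary regularity for the Lopatinski--Shapiro condition verified in Lemma~4.1, in the Bartnik--Chru\'sciel framework. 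This shows $v$ is smooth up to $\Sigma$ and lies in $H^1_{\mathrm{loc}}$ with $P_{T,-}\gamma v=0$.

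The main obstacle is then showing $v\in\mathcal H_T$, i.e.\ $\overline\nabla v\in L^2$, so that the estimate forces $v=0$. Near infinity, $\overline D v=0$ with $v\in L^2$, together with the weighted interior estimates on annuli, gives $\nabla v\in L^2_{\delta-1}$ with a weight better than critical. Alternatively, I apply the integrated Witten identity with a cutoff $\chi_R$. The error terms are bounded by $R^{-1}\int_{A_R}|v||\nabla v|$, and these tend to zero. This yields $\int\chi_R^2|\overline\nabla v|^2\le o(1)$, hence $\overline\nabla v=0$ and $v\in\mathcal H_T$ trivially. The estimate then forces $v=0$.

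Surjectivity plus the estimate gives a bounded inverse with $\|u\|_{\mathcal H}\le\|f\|_{L^2}$. The stated integrability $(1+r)^{-1}u\in L^2$ follows from \eqref{eq:energy-weight-control}. Smoothness up to $\Sigma$ for smooth $f$ follows from interior elliptic regularity together with boundary regularity for the elliptic local boundary condition $P_{T,-}$, bootstrapped in $H^s$.
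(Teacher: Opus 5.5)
Your proposal is correct and follows essentially the same route as the paper. The paper likewise derives coercivity $\|u\|_{\mathcal H}\le\|\overline D u\|_{L^2}$ from the integrated Witten identity with the DEC and Corollary~\ref{cor:boundary-positive}, gets injectivity and closed range, upgrades an $L^2$ cokernel element $w$ via the Bartnik--Chru\'sciel weak-to-strong theorem for the self-adjoint chirality condition, and then kills $w$ by applying the coercive estimate to $\chi_R w$. Note, however, that the paper proves the weighted Poincar\'e inequality \eqref{eq:energy-weight-control} inside this proof, whereas you take it as given. Also, applying coercivity directly to $\chi_R w$ gives $\|\chi_R w\|_{\mathcal H}^2\le\int|d\chi_R|^2|w|^2\to0$ with no $|w|\,|\nabla w|$ cross term; in your weighted version that term has to be absorbed by Cauchy--Schwarz, not simply asserted to vanish.
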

\begin{proof}
In an asymptotic orthonormal spin frame,
\[
 \overline\nabla_{\partial_i}u=\partial_i u+\Gamma_i u,
 \qquad |\Gamma_i|=O(r^{-1-\tau}).
\]
For a smooth compactly supported spinor, integration
by parts along a radial ray gives
\begin{align*}
 (n-2)\int_R^\infty r^{n-3}|u|^2\,dr
 &=-R^{n-2}|u(R)|^2
   -2\int_R^\infty r^{n-2}(\partial_ru,u)\,dr\\
 &\leq 2\int_R^\infty r^{n-2}|\partial_ru|\,|u|\,dr.
\end{align*}
Integrating over the sphere and applying Cauchy--Schwarz yields
\[
 \|r^{-1}u\|_{L^2(r>R,dx)}
 \leq\frac{2}{n-2}\|\partial_ru\|_{L^2(r>R,dx)}.
\]
The Euclidean and $g$ norms and measures are uniformly equivalent on the
end. Since $\partial_ru=\overline\nabla_{\partial_r}u-\Gamma_ru$, this
implies that for sufficiently large $R$, 
\begin{align*}
 \|r^{-1}u\|_{L^2(r>R)}
 &\leq C\|\overline\nabla u\|_{L^2(r>R)}
      +CR^{-\tau}\|r^{-1}u\|_{L^2(r>R)}\\
 &\leq C\|\overline\nabla u\|_{L^2(r>R)}.
\end{align*}
To control the compact part, we choose a connected compact region $K$ containing
that part and a fixed annulus $A\subset\{R<r<2R\}$. We claim that 
\[
 \|u\|_{L^2(K)}
 \leq C_K\bigl(\|\overline\nabla u\|_{L^2(K)}
                  +\|u\|_{L^2(A)}\bigr).
\]
Otherwise, after normalization there would be a sequence $u_j$ with
$\|u_j\|_{L^2(K)}=1$ and both terms on the right tending to zero.
Since $\overline\nabla-\nabla$ is bounded on $K$, this sequence is
bounded in $H^1(K)$. By Rellich compactness, there exists a subsequence converging
strongly in $L^2(K)$ to a spinor $u$ with
\[
 \|u\|_{L^2(K)}=1,\qquad
 \overline\nabla u=0,\qquad u|_A=0.
\]
Locally the equations $\partial_i u=-\Gamma_i u$ bootstrap its Sobolev regularity.
Uniqueness of parallel transport then implies $u=0$ on connected $K$,
a contradiction. Combining this compact estimate with the exterior
estimate proves \eqref{eq:energy-weight-control}.

On any compact region $K$, boundedness of
$\overline\nabla-\nabla$ and \eqref{eq:energy-weight-control} now gives
\[
 \|u\|_{H^1(K)}\leq C_K\|u\|_{\mathcal H}.
\]
Thus the completion embeds into local $H^1$, and the trace map into
$H^{1/2}(\Sigma)$ is continuous. By \eqref{eq:energy-boundary-space}, every
$u\in\mathcal H_T$ satisfies $P_{T,-}\gamma u=0$ in the trace sense.

Next, let $u\in C_c^\infty(\overline M,\overline S)$ satisfy the homogeneous
boundary condition. We integrate the Witten identity in Proposition 3.1, and after rearrangment, we have 
\begin{equation}\label{eq:coercive-identity}\begin{aligned}
 \|\overline Du\|_{L^2}^2
 &=\|u\|_{\mathcal H}^2
 +\frac12\int_M\left(
    \mu|u|^2+\sum_\alpha
    \langle J^\alpha,X^\alpha(u)\rangle\right)\,dV_g
 +\int_\Sigma\mathcal B_{\Sigma,T}(u)\,dA.
\end{aligned}\end{equation}
Note that by the trace-norm Clifford estimate and t
\[
 \mu|u|^2+\sum_\alpha\langle J^\alpha,X^\alpha(u)\rangle
 \geq(\mu-\|\mathcal J\|_{\mathrm{tr}})|u|^2\geq0.
\]
The boundary-current estimate from Section 4 gives
\[
 \mathcal B_{\Sigma,T}(u)
 \geq-\frac12
 \left(H+\operatorname{tr}_\Sigma k^T+\|Q_T\|_{\mathrm{tr}}\right)|u|^2
 \geq0.
\]
Consequently
\[
 \|u\|_{\mathcal H}\leq\|\overline Du\|_{L^2}.
\]
Conversely, the Clifford relations and Cauchy--Schwarz give
\[
 |\overline Du|^2
 =\left|\sum_i e_i\overline\nabla_i u\right|^2
 \leq n\sum_i|\overline\nabla_i u|^2.
\]
Hence $\overline D:\mathcal H\to L^2$ is bounded, and density in the
definition \eqref{eq:energy-boundary-space} extends the coercive estimate to every
$u\in\mathcal H_T$:
\[
 {\quad
 \|u\|_{\mathcal H}\leq\|\overline Du\|_{L^2}
 \leq\sqrt n\,\|u\|_{\mathcal H}.
 \quad}
\]
The first inequality proves injectivity and the range
is closed.\\
Suppose $w\in L^2(M,\overline S)$ is orthogonal to the range. Then
\[
 \int_M(w,\overline Dv)\,dV_g=0
 \qquad\text{for every }v\in\mathcal H_T.
\]
In particular, this holds for every smooth compactly supported $v$ with
$P_{T,-}\gamma v=0$. Interior test spinors first imply
$\overline Dw=0$ distributionally in the interior. At the boundary this
is the weak adjoint problem. Lemma 4.1 identifies its boundary condition
as the same homogeneous chirality condition.\\
We apply the weak-to-strong boundary regularity theorem
\cite[Theorem~6.4]{BartnikChrusciel2003Applications}, using the numbering
of the preprint with applications. In inward normal coordinates the
self-adjoint tangential operator is $-D_\Sigma$. Let $p$ and $q$ be its
positive and negative spectral projections, and let $z_\pm$ project onto
the $\varepsilon_T=\pm1$
subspaces of $\ker D_\Sigma$. Anticommutation in \eqref{eq:4.6} shows that
$\varepsilon_Tp=q\varepsilon_T$ and that $\varepsilon_T$ preserves
$\ker D_\Sigma$. With $P=p+z_-$ and $K=\varepsilon_Tq$, the chirality
condition for a boundary spinor $\varphi$ is equivalent to
\[
 P\varphi=K(I-P)\varphi.
\]
Both $K$ and its adjoint preserve the spectral $H^{1/2}$ spaces, since
$\varepsilon_T$ commutes with $|D_\Sigma|$. Thus the graph hypotheses
hold, including at zero eigenvalues. The smooth Dirac coefficients and
the smooth zero-order term $\overline D-D$ satisfy the local coefficient
hypotheses of that theorem.

The weak identity also holds for compactly supported $H^1$ test spinors
with the same boundary condition, as in the definition of weak solution
in that reference. This follows by local $H^1$ approximation: use smooth
bundle frames extending $S_+\oplus S_-$ from the boundary into a collar,
approximate the $S_+$ components by smooth functions, and the zero-trace
$S_-$ components by smooth zero-trace functions. A partition of unity
preserves the homogeneous condition. This local fact does not require
identifying $\mathcal H_T$ with the full global trace kernel.

The boundary theorem therefore gives
\[
 w\in H^1_{\mathrm{loc}}(\overline M,\overline S),
 \qquad \overline Dw=0,
 \qquad P_{T,-}\gamma w=0.
\]
Since the coefficients and the local elliptic boundary projector are
smooth, local elliptic boundary regularity bootstraps this homogeneous
solution to smoothness up to $\Sigma$.

Choose a smooth cutoff $\chi_R$ equal to one on the compact part and on
$\{r\leq R\}$, equal to zero on $\{r\geq2R\}$, and satisfying
$|d\chi_R|\leq C/R$. The smooth spinor $\chi_Rw$ is compactly supported
and satisfies $P_{T,-}\gamma(\chi_Rw)=0$. It therefore belongs directly
to the defining class in \eqref{eq:energy-boundary-space}. By the Leibniz rule, we have 
\[
 \overline D(\chi_Rw)
 =\chi_R\overline Dw+\sum_i e_i(e_i\chi_R)w
 =\sum_i e_i(e_i\chi_R)w.
\]
If $V_R=\sum_i(e_i\chi_R)e_i$, then the Clifford relations give
$V_R^*V_R=|d\chi_R|^2I$. Coercivity therefore implies
\begin{align*}
 \|\chi_Rw\|_{\mathcal H}^2
 &\leq\|\overline D(\chi_Rw)\|_{L^2}^2\\
 &=\int_M|d\chi_R|^2|w|^2\,dV_g\\
 &\leq\frac{C}{R^2}
       \int_{\{R<r<2R\}}|w|^2\,dV_g
 \longrightarrow0,
\end{align*}
where $w\in L^2$ is used in the last step. Applying \eqref{eq:energy-weight-control} to
$\chi_Rw$ gives
\[
 \|(1+r)^{-1}\chi_Rw\|_{L^2}\longrightarrow0.
\]
For every fixed compact region $K$, the cutoff equals one on $K$ when
$R$ is sufficiently large. Hence
\[
 \int_K(1+r)^{-2}|w|^2\,dV_g=0, 
\]
and $M$ by such regions proves $w=0$.\\
The range of $\overline D:\mathcal H_T\to L^2$ is closed and has zero
orthogonal complement, so it is all of $L^2$. Together with injectivity,
this proves the isomorphism. For its solution,
\[
 \|u\|_{\mathcal H}\leq\|\overline Du\|_{L^2}=\|f\|_{L^2},
 \qquad
 \|(1+r)^{-1}u\|_{L^2}\leq C\|f\|_{L^2}.
\]
Local $H^1$ regularity follows from the energy-space construction. Smooth
local $f$ gives smoothness up to the boundary by the same local elliptic
regularity for the homogeneous boundary projector. This proves all the
assertions in the proposition. 
\end{proof}

\begin{corollary}\label{cor:witten-spinor}
Assume the hypotheses of Proposition~5.1.  For every constant spinor $\psi^\infty$ on the asymptotic model there is a
  unique affine energy solution \(\psi\in\psi_0+\mathcal H_T\) such that 
  \begin{equation}\label{eq:witten-spinor-problem}
    \overline D\psi=0,\qquad
    P_{T,-}\gamma\psi=0,\qquad
    \psi=\psi_0+\chi, 
  \end{equation}
  where $\psi_0$ is any smooth extension that vanishes near $\Sigma$ and
  equals $\psi^\infty$ in the asymptotic spin frame outside a compact set. The resulting spinor is independent of the chosen extension \(\psi_0\).
Thus, for every \(\psi^\infty\), there is a unique solution in the
affine energy class \(\psi_0+\mathcal H_T\).  
\end{corollary}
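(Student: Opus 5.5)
The plan is to reduce the problem to Proposition~\ref{prop:weighted-solvability} by the standard affine ansatz. Fix a smooth extension $\psi_0$ as in the statement: it vanishes near $\Sigma$, so $P_{T,-}\gamma\psi_0=0$ trivially, and it equals $\psi^\infty$ in the asymptotic spin frame outside a compact set. We seek $\psi=\psi_0+\chi$ with $\chi\in\mathcal H_T$ and $\overline D\chi=-\overline D\psi_0$. Once we know $f:=-\overline D\psi_0\in L^2(M,\overline S)$, the proposition gives a unique $\chi\in\mathcal H_T$ with $\overline D\chi=f$ and $P_{T,-}\gamma\chi=0$. Then $\psi$ satisfies \eqref{eq:witten-spinor-problem}, because the trace condition is linear and $\psi_0$ has zero trace. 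Since $f$ is smooth, $\chi$ is smooth up to $\Sigma$ by the regularity clause of the proposition.

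The key estimate is $\overline D\psi_0\in L^2$. Outside a compact set, in the asymptotic orthonormal spin frame, $\psi_0$ has constant components. Hence $\nabla\psi_0=\Gamma\psi_0$, where the connection coefficients are $O(r^{-1-\tau})$ because $\partial g=O(r^{-1-\tau})$ by \eqref{eq:af-decay}. Also $\overline\nabla\psi_0-\nabla\psi_0=\tfrac12k^\alpha_{ij}e_j\eta_\alpha\psi_0=O(r^{-1-\tau})$. Therefore $|\overline D\psi_0|\le C r^{-1-\tau}$ on the end, and
\[
\int_{r>R} r^{-2-2\tau}r^{n-1}\,dr<\infty
\]
exactly when $2\tau>n-2$. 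This is precisely the hypothesis $\tau>(n-2)/2$ of Definition~\ref{def:af-data}. On the compact part $\overline D\psi_0$ is smooth, hence also square integrable. The same computation shows $\overline\nabla\psi_0\in L^2$, which is useful later but not needed here.

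For uniqueness within the affine class, suppose $\psi=\psi_0+\chi$ and $\psi'=\psi_0+\chi'$ are two solutions. Then $\chi-\chi'\in\mathcal H_T$ and $\overline D(\chi-\chi')=0$, so injectivity from Proposition~\ref{prop:weighted-solvability} gives $\chi=\chi'$.

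For independence of the extension, take a second admissible extension $\psi_0'$. The difference $\psi_0-\psi_0'$ is smooth, vanishes near $\Sigma$, and vanishes outside a compact set, since both extensions equal $\psi^\infty$ there. It is therefore compactly supported in $\overline M$ with zero trace, and so lies in $\mathcal H_T$ by definition \eqref{eq:energy-boundary-space}. Consequently the affine classes coincide, $\psi_0+\mathcal H_T=\psi_0'+\mathcal H_T$, and the uniqueness just proved shows that the two solutions agree.

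The main point needing care is the $L^2$ integrability of $\overline D\psi_0$. It is the only place where the decay rate enters, and it is borderline, since it needs the strict inequality $\tau>q_*$. One must also check that the "constant spinor in the asymptotic frame" is defined using the orthonormalized (Gram–Schmidt of $\partial_i$) frame. Only then is $\partial\psi_0=0$ and the error controlled purely by $\partial g$ and $k$; this frame has derivatives $O(r^{-1-\tau})$ because $g-\delta=O_{2,\gamma}(r^{-\tau})$.
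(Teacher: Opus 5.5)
Your proposal is correct and matches the paper's proof essentially step for step: the affine ansatz $\psi=\psi_0+\chi$, the bound $\overline D\psi_0=O(r^{-\tau-1})\in L^2$ from $\tau>(n-2)/2$, Proposition~\ref{prop:weighted-solvability} applied with source $f=-\overline D\psi_0$, and injectivity applied to $\psi_0-\psi_0'\in\mathcal H_T$ to get independence of the extension. One minor quibble: your claim that the constant spinor must be taken in the Gram--Schmidt frame is too restrictive, since any orthonormalization of $\partial_i$ whose derivatives are $O(r^{-1-\tau})$ works (the paper later uses the symmetric gauge), but this does not affect the argument.
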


\begin{proof}
    Choose a smooth cutoff extension \(\psi_0\). Outside a compact set,
\(\psi_0=\psi^\infty\) has constant components in the asymptotic spin
frame. Since $g_{ij}-\delta_{ij}=O_{2,\gamma}(r^{-\tau})$, the asymptotic spin connection coefficients are
\(O(r^{-\tau-1})\). Moreover, $k^\alpha=O(r^{-\tau-1})$. It follows from the equations \eqref{eq:modified-connection-prelim} and
  \eqref{eq:modified-dirac-zero-order} of \(\overline\nabla\) and
\(\overline D\) that
 \begin{equation}\label{eq:source-decay}
    f:=-\overline D\psi_0=O(r^{-\tau-1}).
  \end{equation}
Hence, we have 
\[
\begin{aligned}
\int_{\{r>R\}}|f|^2\,dV_g
&\le
C\int_R^\infty r^{n-1-2(\tau+1)}\,dr\\
&=
C\int_R^\infty r^{n-3-2\tau}\,dr
<\infty,
\end{aligned}
\]
because \(\tau>(n-2)/2\). The contribution on the compact part of
\(M\) is finite, so \(f\in L^2(M,S)\).

Since \(\psi_0\) vanishes in a boundary collar, $P_{T,-}\gamma\psi_0=0$. Proposition~~\ref{prop:weighted-solvability}, applied with homogeneous boundary data, gives a unique
\(\chi\in\mathcal H_T\) satisfying $\overline D\chi=f$. Then   $\psi=\psi_0+\chi$ satisfies \eqref{eq:witten-spinor-problem}, ie. 
\[
\overline D(\psi_0+\chi)
=
\overline D\psi_0+f
=
0, \quad P_{T,-}\gamma(\psi_0+\chi)=0.
\]
Finally, notice that a nonzero constant spinor does not belong to
\(L^2_{\delta_*}\), since
\[
\int_R^\infty
r^{-2}r^{n-1}\,dr
=
\int_R^\infty r^{n-3}\,dr
=
\infty.
\]
Thus the solution is correctly formulated in the affine class
\(\psi_0+\mathcal H_T\). 
If $\widetilde\psi_0$ is another such extension, then
$\psi_0-\widetilde\psi_0$ is smooth, compactly supported, and zero near
$\Sigma$, hence belongs to $\mathcal H_T$. The difference of the two
resulting solutions therefore lies in $\mathcal H_T$ and is annihilated
by $\overline D$. Injectivity in Proposition~\ref{prop:weighted-solvability}
shows that the final spinor $\psi$ is independent of the extension.

\end{proof}

\section{The mass identity and proof of Theorem~\ref{thm:main-boundary}}
\label{sec:positive-mass}

\begin{lemma}
\label{lem:energy-asymptotics}
  Assume the hypotheses of Proposition 5.1 and let $\psi=\psi_0+\chi$ be the Witten spinor of
  Corollary~\ref{cor:witten-spinor} with $\chi\in \mathcal H_T$.  Then
  \begin{equation}\label{eq:actual-decay}
    \overline\nabla\chi\in L^2(M),\qquad
    (1+r)^{-1}\chi\in L^2(M),
  \end{equation}
  and $\chi$ is smooth locally up to the boundary $\Sigma$.  The following
  energy-class mass identity holds:
  \begin{align}
   \frac{(n-1)\omega_{n-1}}{2}
      \left[
       E N(\psi^\infty)
       +\sum_{\alpha=1}^m
         \langle P^\alpha,X^\alpha(\psi^\infty)\rangle
      \right]\notag&=\int_M\left[
      |\overline\nabla\psi|^2
      +\frac12\mu|\psi|^2
      +\frac12\sum_\alpha
        \langle J^\alpha,X^\alpha(\psi)\rangle
     \right]dV_g\\
     & +\int_\Sigma\mathcal B_{\Sigma,T}(\psi)\,dA.
   \label{eq:energy-class-mass-identity}
  \end{align}
\end{lemma}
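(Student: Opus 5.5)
The regularity assertions \eqref{eq:actual-decay} and smoothness up to $\Sigma$ follow at once from Proposition~\ref{prop:weighted-solvability}. There $\chi\in\mathcal H_T$ solves $\overline D\chi=f$ with $f=-\overline D\psi_0$. This source is smooth up to $\Sigma$, since $\psi_0$ is smooth and vanishes near $\Sigma$, and it lies in $L^2$ by \eqref{eq:source-decay}. The proposition then gives $\overline\nabla\chi\in L^2$, $(1+r)^{-1}\chi\in L^2$, and local smoothness up to the boundary. The substance of the lemma is the mass identity.

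\textbf{Integrating the Witten identity.} Integrate Proposition~\ref{prop:witten-identity} for $\psi$ over the region $M_R=\{r\le R\}\cup K$ bounded by $\Sigma$ and the coordinate sphere $S_R$. Since $\overline D\psi=0$, the divergence theorem gives
\[
\int_{S_R}\mathcal U_i(\psi)\nu_r^i\,dA+\int_\Sigma\mathcal B_{\Sigma,T}(\psi)\,dA=\int_{M_R}\Big[|\overline\nabla\psi|^2+\tfrac12\mu|\psi|^2+\tfrac12\textstyle\sum_\alpha\langle J^\alpha,X^\alpha(\psi)\rangle\Big]dV_g.
\]
The sign of the $\Sigma$ term comes from the fact that $\nu$ points into $M$, so the outward normal of $M_R$ at $\Sigma$ is $-\nu$. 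On $\Sigma$, Proposition~\ref{prop:boundary-current} applies pointwise because $\psi$ is smooth up to $\Sigma$ and satisfies $P_{T,-}\gamma\psi=0$.

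\textbf{Letting $R\to\infty$ in the bulk.} The integrand is nonnegative by Lemma~\ref{lem:trace-norm-clifford}, so monotone convergence applies. The resulting limit is finite because of the following bounds. We have $|\overline\nabla\psi|^2\le 2|\overline\nabla\psi_0|^2+2|\overline\nabla\chi|^2$, with $\overline\nabla\psi_0=O(r^{-\tau-1})\in L^2$. The terms $\mu|\psi|^2$ and $|J||\psi|^2$ are integrable given that $\mu,J\in L^1$ and $\psi$ is bounded. The limit therefore exists precisely when the sphere term has a limit.

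\textbf{Identifying the sphere term (main obstacle).} Write $\psi=\psi_0+\chi$ on $S_R$ and split $\mathcal U(\psi)$ into three parts: $\mathcal U(\psi_0)$, the cross terms, and $\mathcal U(\chi)$.
\begin{itemize}
\item For the $\psi_0=\psi^\infty$ part, the standard Witten/Parker--Taubes computation in the asymptotic spin frame shows that $\lim\int_{S_R}\mathcal U_i(\psi_0)\nu^i$ equals $\tfrac{(n-1)\omega_{n-1}}{2}\big[E|\psi^\infty|^2+\sum_\alpha\langle P^\alpha,X^\alpha(\psi^\infty)\rangle\big]$. The Levi-Civita part of $(e_iD\psi_0+\nabla_i\psi_0,\psi_0)$ produces $\tfrac14(\partial_jg_{ij}-\partial_ig_{jj})|\psi^\infty|^2$ up to an exact tangential divergence, and the $\pi^\alpha$ term produces the momentum pairing by \eqref{eq:adm-momentum-prelim}.
\item The terms involving $\chi$ are not pointwise small, since $\chi$ has no decay rate; only $\chi\in W^{1,2}_{\delta_*}$ is known. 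Here I would not take pointwise limits. Instead I would average the identity over $R\in[\rho,2\rho]$, converting sphere integrals over $S_R$ into annulus integrals over $A_\rho$. Each term containing $\chi$ is then bounded by products of the following kinds: $\|\nabla\chi\|_{L^2(A_\rho)}$ or $\|r^{-1}\chi\|_{L^2(A_\rho)}$, against $\|\nabla\psi_0\|_{L^2(A_\rho)}$ or $\|r^{-1}\chi\|_{L^2(A_\rho)}$ (which are $o(1)$ by the tails of square-integrable quantities), or against $\rho^{-1}|A_\rho|^{1/2}\cdot\rho^{-\tau}$, which is $O(\rho^{(n-2)/2-\tau})\to0$ since $\tau>(n-2)/2$.
\item The term that fails this crude bound is $(e_iD\chi+\nabla_i\chi,\psi^\infty)$, which is linear in $\chi$. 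To handle it I would rewrite $(e_iD+\nabla_i)$ contracted with $\nu_r$ as a tangential Dirac operator plus lower order terms, i.e.\ the divergence-form structure used in Corollary-type computations. Equivalently, I would use $\overline D\chi=f$ with $f=O(r^{-\tau-1})$ and integrate by parts on the annulus against the constant spinor. The error terms are then again $O(\rho^{(n-2)/2-\tau})+o(1)$.
\end{itemize}

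The averaged identity converges to \eqref{eq:energy-class-mass-identity}. Since the bulk limit exists, the identity holds in the form stated.
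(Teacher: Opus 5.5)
Your route differs from the paper's and can be made to work. However, the write-up has a sign error, one unjustified (but dispensable) claim, and one suggested alternative that fails.

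\textbf{Comparison with the paper.} The paper never integrates the identity for $\psi$ itself. It approximates $\chi$ in $\mathcal H$ by $\chi_a\in C_c^\infty$ with $P_{T,-}\gamma\chi_a=0$. For $R$ beyond $\operatorname{supp}\chi_a$ the flux through $S_R$ is then exactly that of $\psi^\infty$, so no analysis of $\chi$ at infinity is ever needed. It then lets $a\to\infty$, using:
\begin{enumerate}
\item $\overline D\psi_a\to0$ in $L^2$;
\item the bound $\int_M(v,\mathbf Qv)\,dV_g\le 2n\|v\|_{\mathcal H}^2$, obtained from the coercive identity, to pass to the limit in the $\mu,J$ term without pointwise control of $\chi$;
\item trace continuity for the $\Sigma$ term.
\end{enumerate}

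You instead apply the divergence theorem to the actual solution, which is legitimate given the smoothness up to $\Sigma$ from Proposition~5.1, and kill the $\chi$-flux by averaging over $R\in[\rho,2\rho]$. This works:
\begin{enumerate}
\item The quadratic term and the cross term $(\nu_r\overline D\psi^\infty+\overline\nabla_{\nu_r}\psi^\infty,\chi)$ are bounded by the tails of $\overline\nabla\chi$ and $r^{-1}\chi$, times a factor $\rho^{(n-2)/2-\tau}\to0$.
\item For the linear term, $(E_iD\chi+\nabla_i\chi,\psi^\infty)=\partial_j\bigl((E_iE_j+\delta_{ij})\chi,\psi^\infty\bigr)$ up to errors $O(r^{-\tau})|\partial\chi|+O(r^{-\tau-1})|\chi|$. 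Here $E_iE_j+\delta_{ij}=\tfrac12[E_i,E_j]$ is antisymmetric, so the principal part has zero flux through every closed sphere.
\end{enumerate}
Your version needs neither the density description of $\mathcal H_T$ nor the $\mathbf Q$-energy bound, and finiteness of the bulk integral comes for free from monotonicity. The paper's version avoids all asymptotic analysis of $\chi$.

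\textbf{Corrections.}
\begin{enumerate}
\item \emph{Sign on $\Sigma$.} Your displayed identity has the wrong sign on $\Sigma$. With outward normal $-\nu$ there, the divergence theorem gives
$\int_{S_R}\mathcal U_i(\psi)\nu_r^i\,dA=\int_{M_R}[\cdots]\,dV_g+\int_\Sigma\mathcal B_{\Sigma,T}(\psi)\,dA$,
as in the paper. As written, you would end up with $-\int_\Sigma\mathcal B_{\Sigma,T}(\psi)\,dA$. That is not \eqref{eq:energy-class-mass-identity}, and it would destroy the sign argument in the main theorem.
\item \emph{Boundedness of $\psi$.} This is not established; the paper stresses that no decay of $\chi$ is known. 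Drop the claim: nonnegativity of the bulk integrand together with convergence of the averaged flux already gives finiteness.
\item \emph{The ``equivalent'' route via $\overline D\chi=f$.} It is not equivalent. Substituting $\overline D\chi=f$ into the cross term leaves $(\nu_r f,\psi^\infty)=-(\nu_r\overline D\psi^\infty,\psi^\infty)$. Its flux does not tend to zero, since it carries the energy term of the ADM functional. Moreover, $(\overline\nabla_{\nu_r}\chi,\psi^\infty)$ has no small bound on its own. Only the antisymmetric (tangential Dirac) structure produces the cancellation, so keep that formulation.
\end{enumerate}
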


\begin{remark}
 \eqref{eq:energy-class-mass-identity} is a weak boundary
  functional statement.  No assertion that $\chi=O(r^{-q})$, or that the
  literal sphere flux of $\chi$ has a limit, is needed or implied.   
\end{remark}

This is the compact-boundary, energy-class analogue of the integral formula
in \cite[Theorem~3.6]{HirschPayneZhang2026}.  The ADM integrand is the same,
but the proof below adds the compact-boundary current and replaces HPZ's
claimed $C^{2,\alpha}_{-q}$ remainder by the decay actually supplied by the
boundary energy theory.

\begin{proof}
By Corollary~5.2, $\chi\in\mathcal{H}_T\subset\mathcal{H}$, and the energy norm is
\[
\lVert\chi\rVert_{\mathcal{H}}^2
=
\int_M
\bigl|\overline{\nabla}\chi\bigr|^2\,dV_g.
\]
Therefore, $\overline{\nabla}\chi\in L^2(M)$. By the weighted Poincar\'e inequality, 
\[
\int_M
(1+r)^{-2}|\chi|^2\,dV_g
\leq
C\int_M
\bigl|\overline{\nabla}\chi\bigr|^2\,dV_g.
\]
Hence, we also have $(1+r)^{-1}\chi\in L^2(M)$. 

  We now prove \eqref{eq:energy-class-mass-identity} by a density argument.   By
  \eqref{eq:energy-boundary-space}, choose
  $\chi_a\in C_c^\infty(\overline M,\mathbb S)$ satisfying
  \begin{equation}\label{eq:energy-approximation}
    P_{T,-}\gamma\chi_a=0, \qquad \|\chi_a-\chi\|_{\mathcal H}\longrightarrow0.
  \end{equation}
  Put $\psi_a=\psi_0+\chi_a$.  For each $a$, choose $R$ beyond the support
  of $\chi_a$. Let \(M_R\) denote the region bounded by the large coordinate sphere \(S_R\) with compact inner boundary \(\Sigma\), so that \(\partial M_R=S_R\sqcup\Sigma\). By the divergence theorem and Proposition~\ref{prop:witten-identity}, integrating over the
  region $M_R$ bounded by $S_R$ and $\Sigma$ gives
  \begin{align}
   \int_{S_R}\mathcal U_i(\psi_a)\nu_R^i\,dA
&=\int_{M_R}\left[
      |\overline\nabla\psi_a|^2-|\overline D\psi_a|^2
      +\frac12(\psi_a,\mathbf Q\psi_a)
     \right]dV_g +\int_\Sigma\mathcal B_{\Sigma,T}(\psi_a)\,dA,   
   \label{eq:approximate-integrated-identity}
  \end{align}
  where
    $\mathbf Q:=\mu I+\sum_{\alpha,i}J_i^\alpha e_i\eta_\alpha$

  We first evaluate the left side for $R\to\infty$.  Outside a compact set,
  $\psi_a=\psi_0=\psi^\infty$ in the asymptotic spin frame.  Write
  $h_{ij}=g_{ij}-\delta_{ij}$ and choose the symmetric orthonormal gauge
  \[
    E_i=\left(\delta_i^{\,j}-\frac12h_i^{\,j}\right)\partial_j
         +O(r^{-2\tau}).
  \]
  Its spin connection satisfies
  \[
    \nabla_i\psi^\infty
      =\frac14\omega_{ijk}E_jE_k\psi^\infty,\qquad \text{where }
    \omega_{ijk}
      =\frac12(\partial_jh_{ik}-\partial_kh_{ij})
       +O(r^{-2\tau-1}).
  \]
 
  For $
D=E_\ell\nabla_\ell$ and substituting into the first term of 
  \eqref{eq:corrected-witten-current}, followed by skew-symmetry in $j,k$,
  gives
\[
(E_iD\psi^\infty+\nabla_i\psi^\infty,\psi^\infty)
=
\left(\frac{1}{4}
\omega_{\ell jk}
E_iE_\ell E_jE_k\psi^\infty
+
\frac{1}{4}
\omega_{ijk}
E_jE_k\psi^\infty,\psi^\infty \right).
\]
Since \(\omega_{ijk}\) is skew-symmetric in \(j,k\), $\left(
\frac{1}{4}
\omega_{ijk}E_jE_k\psi^\infty,
\psi^\infty
\right)
=
0$. To handle the fourfold Clifford product, write $\omega^{(1)}$ for
the displayed first-order part of $\omega$. Symmetry of $h$ gives
\[
 \omega^{(1)}_{\ell jk}+\omega^{(1)}_{jk\ell}
 +\omega^{(1)}_{k\ell j}=0.
\]
Hence its completely antisymmetric part vanishes, and Clifford contraction
yields
\[
 D\psi^\infty
 =\frac14(\partial_i h_{jj}-\partial_j h_{ij})E_i\psi^\infty
 +O(r^{-2\tau-1}).
\]
Using $(E_iE_j\psi^\infty,\psi^\infty)=-\delta_{ij}|\psi^\infty|^2$
for the real pairing, we obtain
  \begin{align}
   (E_iD\psi^\infty+\nabla_i\psi^\infty,\psi^\infty)\nu_R^i&=
\frac{1}{2}
\omega_{\ell\ell i}\nu_R^i|\psi^\infty|^2
+
O(r^{-2\tau-1}) \notag\\
    &=\frac14(\partial_jg_{ij}-\partial_ig_{jj})\nu_R^i
       |\psi^\infty|^2+O(r^{-2\tau-1}).
   \label{eq:constant-energy-flux}
  \end{align}
  The momentum part is
  \begin{align}
   \frac12\sum_\alpha\pi^\alpha_{ij}\nu_R^i
      (E_j\eta_\alpha\psi^\infty,\psi^\infty)
    &=\frac12\sum_\alpha\pi^\alpha_{ij}\nu_R^i
      (e_j\eta_\alpha\psi^\infty,\psi^\infty)+O(r^{-2\tau-1}).
   \label{eq:constant-momentum-flux}
  \end{align}
  Since the area of $S_R$ is $O(R^{n-1})$ and
  $2\tau>n-2$, the integrated errors in
  \eqref{eq:constant-energy-flux} and
  \eqref{eq:constant-momentum-flux} tend to zero.  Using
  \eqref{eq:adm-energy-prelim}--\eqref{eq:adm-momentum-prelim} and symmetry
  of $\pi^\alpha$, we obtain
  \begin{align}
   \lim_{R\to\infty}\int_{S_R}\mathcal U_i(\psi_a)\nu_R^i\,dA
  =\frac{(n-1)\omega_{n-1}}2
      \left[
       E N(\psi^\infty)
       +\sum_\alpha\langle P^\alpha,
          X^\alpha(\psi^\infty)\rangle
      \right],
   \label{eq:constant-spinor-adm-functional}
  \end{align}
  where right side is independent of $a$.

  We next pass $a\to\infty$ on the right side of
  \eqref{eq:approximate-integrated-identity}.  From
  \eqref{eq:energy-approximation},
  $\overline\nabla\psi_a\to\overline\nabla\psi$ in $L^2$ and so $\int_M
\bigl|\overline{\nabla}\psi_a\bigr|^2\,
\longrightarrow
\int_M
\bigl|\overline {\nabla}\psi\bigr|^2\,$.   Also
  \[
    |\overline D(\psi_a-\psi)|
      =|e_i\overline\nabla_i(\psi_a-\psi)|
      \leq\sqrt n\,|\overline\nabla(\psi_a-\psi)|,
  \]
  so $\overline D\psi_a\to\overline D\psi=0$ in $L^2$ and thus $\int_M
\bigl|\widetilde{D}\psi_a\bigr|^2\,
\longrightarrow0$. 

  Lemma~\ref{lem:trace-norm-clifford} says that $\mathbf Q$ is a
  nonnegative self-adjoint endomorphism.  The DEC also gives
  $\|\mathbf Q\|_{\mathrm{op}}\leq2\mu$, so
  $\int(\psi_0,\mathbf Q\psi_0)\leq
2\int_M
\mu|\psi_0|^2<\infty$ as $\mu\in L^1(M)$ and
  $\psi_0$ is bounded.  For a compactly supported homogeneous-boundary
  spinor $v$ satisfying $P_{T,-}\gamma v=0$, \eqref{eq:coercive-identity} and
  $|\overline Dv|^2\leq n|\overline\nabla v|^2$ yield
\begin{align}\label{eq:Q-energy-control}
\int_M
(v,\textbf{Q}v)\,dV_g
\leq
2\int_M
\bigl|\overline{D}v\bigr|^2\,dV_g&\leq
2n\int_M
\bigl|\overline{\nabla}v\bigr|^2\,dV_g \notag  \\
&=
2n\lVert v\rVert_{\mathcal{H}}^2.
\end{align}
 The estimate makes $\mathbf Q^{1/2}$ a bounded map from the dense
  smooth homogeneous-boundary subspace of $\mathcal H_T$ to $L^2$.
  It therefore extends continuously to $\mathcal H_T$. Local $L^2$
  convergence and local boundedness of $\mathbf Q^{1/2}$ identify this
  extension with pointwise multiplication by $\mathbf Q^{1/2}$.
  Thus the estimate extends to every $v\in\mathcal H_T$.  The pointwise
  Cauchy--Schwarz inequality for the positive self-adjoint form $\mathbf Q$ gives
  \[
    |(u,\mathbf Qv)|
      \leq(u,\mathbf Qu)^{1/2}(v,\mathbf Qv)^{1/2}.
  \]
  After integration and an ordinary Cauchy--Schwarz inequality, this gives 
  \[
\left|
\int_M
(u,\textbf{Q}v)\,dV_g
\right|
\leq
\left(
\int_M
(u,\textbf{Q}u)\,dV_g
\right)^{1/2}
\left(
\int_M
(v,\textbf{Q}v)\,dV_g
\right)^{1/2}.
\]
Now let $v_a=\psi_a-\psi$, and we know from \eqref{eq:Q-energy-control} that 
\[
\int_M
(v_a,\textbf{Q}v_a)\,dV_g
\leq
2n\lVert v_a\rVert_{\mathcal{H}}^2
\longrightarrow0, \quad \int_M
(\psi,\textbf{Q}\psi)\,dV_g
<\infty
\]
Hence, 
\begin{align*}
&\left|
\int_M
\left[
(\psi_a,\textbf{Q}\psi_a)-(\psi,\textbf{Q}\psi)
\right]\,dV_g
\right| 
\leq
2
\left(
\int_M
(\psi,\textbf{Q}\psi)\,dV_g
\right)^{1/2}
\left(
\int_M
(v_a,\textbf{Q}v_a)\,dV_g
\right)^{1/2}
+
\int_M
(v_a,\textbf{Q}v_a)\,dV_g,
\end{align*}
which goes to $0$. Therefore,
\[
\int_M
(\psi_a,Q\psi_a)\,dV_g
\longrightarrow
\int_M
(\psi,Q\psi)\,dV_g.
\]
Finally, on a fixed compact collar \(K\) of \(\Sigma\), by the weighted Poincar\'e
estimate $\lVert v_a\rVert_{L^2(K)}
\leq
C_K\lVert v_a\rVert_{\mathcal{H}}$. In addition, 
$\nabla_iv_a
=
\overline{\nabla}_iv_a
-
\frac{1}{2}
k_{ij}^\alpha e_j\eta_\alpha v_a$, so we have 
\begin{align*}
\lVert\nabla v_a\rVert_{L^2(K)}
&\leq
\bigl\lVert\overline{\nabla}v_a\bigr\rVert_{L^2(K)}
+
C_K\lVert v_a\rVert_{L^2(K)} \\
&\leq
C_K\lVert v_a\rVert_{\mathcal{H}}.
\end{align*}
Thus $v_a\longrightarrow0$ in $H^1(K)$ implies that  $\gamma v_a\longrightarrow0$ in  \(L^2(\Sigma)\). It follows for the boundary current
\[
B_{\Sigma,T}(u)
=
-\frac{1}{2}(H+\operatorname{tr}_\Sigma k^T)|u|^2
+
\frac{1}{2}(A_Tu,u).
\]
we obtain
\begin{align*}
&\left|
\int_\Sigma
B_{\Sigma,T}(\psi_a)\,dA
-
\int_\Sigma
B_{\Sigma,T}(\psi)\,dA
\right|
\leq
C
\left(
\lVert\gamma\psi_a\rVert_{L^2}
+
\lVert\gamma\psi\rVert_{L^2}
\right)
\lVert\gamma\psi_a-\gamma\psi\rVert_{L^2},
\end{align*}
which tends to zero and so 
\[
\int_\Sigma
B_{\Sigma,T}(\psi_a)\,dA
\longrightarrow
\int_\Sigma
B_{\Sigma,T}(\psi)\,dA.
\]
The desired identity follows from passing first $R\to\infty$ and then $a\to\infty$ in
  \eqref{eq:approximate-integrated-identity}, and using
  $\overline D\psi=0$, gives \eqref{eq:energy-class-mass-identity}.

\end{proof}

\begin{lemma}[Choice of the asymptotic spinor]
\label{lem:asymptotic-spinor}
Let $\mathcal{P}
:=
\sum_{\alpha=1}^{m}
\sum_{i=1}^{n}
P_i^\alpha e_i\eta_\alpha$ 
act on the asymptotic enlarged spinor module $S$. Then $\lambda_{\min}(\mathcal{P})
=
-\lVert P\rVert_{\mathrm{tr}}$ and so there is a nonzero constant spinor $\psi^\infty$ such that
  \begin{equation}\label{eq:momentum-saturation}
    \sum_{\alpha=1}^m
      \langle P^\alpha,X^\alpha(\psi^\infty)\rangle
      =-\|\mathcal P\|_{\mathrm{tr}}N(\psi^\infty).
  \end{equation}
\end{lemma}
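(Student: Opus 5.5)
The plan is to reduce the statement to the Clifford estimate of Lemma~\ref{lem:trace-norm-clifford}, and then to a sharpness argument of the type used in Remark~\ref{rem:boundary-sharpness}, carried out on the full asymptotic module instead of on $S_+$. Note first that
\[
\sum_\alpha\langle P^\alpha,X^\alpha(\psi)\rangle=\sum_{\alpha,i}P^\alpha_i(e_i\eta_\alpha\psi,\psi)=(\mathcal P\psi,\psi).
\]
Each $e_i\eta_\alpha$ is self-adjoint, so $\mathcal P$ is a self-adjoint endomorphism of the finite-dimensional module. Hence $\lambda_{\min}(\mathcal P)$ is the minimum of its Rayleigh quotient, and any eigenvector for that eigenvalue gives \eqref{eq:momentum-saturation}.

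The lower bound $\lambda_{\min}(\mathcal P)\ge-\|\mathcal P\|_{\mathrm{tr}}$ is exactly Lemma~\ref{lem:trace-norm-clifford} with $A=\mathcal P$. For the reverse inequality, I take a singular value decomposition $\mathcal P=U^T\Lambda V$, as in the proof of that lemma. This gives $\mathcal P=\sum_{s=1}^r\sigma_sG_s$ with $G_s=\widehat e_s\widehat\eta_s$ and $r=\min\{m,n\}$. Each $G_s$ is a self-adjoint involution. For $s\neq t$, $G_s$ and $G_t$ commute, because $\widehat e_s\widehat\eta_s$ and $\widehat e_t\widehat\eta_t$ are products of two pairs of mutually anticommuting generators, and that produces four sign changes. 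The $G_s$ can therefore be simultaneously diagonalized with joint eigenvalues $\pm1$. It then suffices to find a nonzero joint eigenvector on which every $G_s$ acts by $-1$; on such a vector $\mathcal P=-\sum_s\sigma_s=-\|\mathcal P\|_{\mathrm{tr}}$.

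To reach all signs negative, I use $\widehat e_s$ as a sign-flipping operator. It is invertible, since $\widehat e_s^2=-I$. It anticommutes with $G_s$, because it commutes with $\widehat e_s$, anticommutes with $\widehat\eta_s$ and $\widehat e_s^2=-I$. It commutes with $G_t$ for $t\ne s$, since it anticommutes with both factors. So if $\psi$ is a joint eigenvector with signs $(\epsilon_1,\dots,\epsilon_r)$, then $\widehat e_s\psi$ is a nonzero joint eigenvector with only $\epsilon_s$ flipped. Starting from any nonzero joint eigenvector, which exists because the module is nonzero, I apply $\widehat e_s$ for each $s$ with $\epsilon_s=+1$. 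This produces the desired $\psi^\infty$. I then extend it to a constant spinor in the asymptotic frame and normalize so that $N(\psi^\infty)\neq0$.

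The only delicate point is checking these commutation relations with the correct signs. One also needs the case $r=n$ with $m\geq n$, where all spatial directions are used, but the flipping argument does not need any spare direction, so that case causes no trouble. If $\mathcal P=0$, both sides vanish and any nonzero $\psi^\infty$ works.
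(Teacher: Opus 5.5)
Your proof is correct and follows the paper's argument: take a singular value decomposition to write $\mathcal P=\sum_s\sigma_s G_s$ with commuting self-adjoint involutions $G_s=\widehat e_s\widehat\eta_s$, then use $\widehat e_s$ to flip the $s$-th joint eigenvalue until every sign is $-1$. The only differences are minor: you get $\lambda_{\min}(\mathcal P)\geq-\|\mathcal P\|_{\mathrm{tr}}$ explicitly from Lemma~\ref{lem:trace-norm-clifford}, and you flip a single joint eigenvector rather than showing that every joint eigenspace is nonzero, which amounts to the same thing.
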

\begin{proof}
  Take an SVD of $\mathcal P$ as  in
  Lemma~\ref{lem:trace-norm-clifford}, $P=U^{T}\Lambda V$ with $U\in O(m), V\in O(n)$, and $\Lambda_{ss}=\sigma_s(P)$ for $1\leq s\leq m$. Consider the rotated Clifford generators $\widetilde{\eta}_s$ and $\widetilde{e}_s$. Then for $G_s:=\widehat e_s\widehat\eta_s$,  we may write 
\begin{align*}
\mathcal{P}
=
\sum_{\alpha,i}
P_{\alpha i}e_i\eta_\alpha &=
\sum_{\alpha,i}
\left(
\sum_{s=1}^{m}
U_{s\alpha}\sigma_s(P)V_{si}
\right)
e_i\eta_\alpha \\
&=
\sum_{s=1}^{m}
\sigma_s(P)
\sum_{\alpha,i}
V_{si}U_{s\alpha}e_i\eta_\alpha \\
&=
\sum_{s=1}^{m}
\sigma_s(P)\hat{e}_s\hat{\eta}_s\\
&=\sum_{s=1}^{m}
\sigma_s(P)G_s.
\end{align*}
  Every $G_s$ is a self-adjoint involution with  eigenvalues
\(+1\) and \(-1\) by the calculation in
  Lemma~\ref{lem:trace-norm-clifford}, ie. $G_s^*=G_s$ and $G_s^2=I$.   If $s\neq t$, then we compute that 
  \[
   G_sG_t=-\widehat e_s\widehat e_t
             \widehat\eta_s\widehat\eta_t
          =G_tG_s,
  \]
For $\varepsilon
=
(\varepsilon_1,\ldots,\varepsilon_m)
\in\{\pm1\}^m$, let 
\[
E_\varepsilon
:=
\bigcap_{s=1}^{m}
\ker\bigl(G_s-\varepsilon_sI\bigr).
\]
Then $\psi\in E_\varepsilon$ implies that $G_s\psi=\varepsilon_s\psi$ for every $s$. Simultaneous diagonalization gives the orthogonal decomposition
\[
S_\infty
=
\bigoplus_{\varepsilon\in\{\pm1\}^m}
E_\varepsilon,
\]
where some \(E_\varepsilon\) could initially be zero. Since the entire spinor space is nonzero, at least one joint eigenspace is
nonzero. Note that $G_s\hat{e}_s
=
-\hat{e}_sG_s$ and $G_t\hat{e}_s
=
\hat{e}_sG_t$ for \(t\neq s\). Since $\hat{e}_s$ is invertible, it follows that 
\[
\hat{e}_s
:
E_{(\varepsilon_1,\ldots,\varepsilon_s,\ldots,\varepsilon_m)}
\longrightarrow
E_{(\varepsilon_1,\ldots,-\varepsilon_s,\ldots,\varepsilon_m)},
\]
is an isomorphism. Then every joint eigenspace $E_\epsilon$ is nonzero and have equal positive dimension. Now we choose the asymptotic spinor
$0\neq\psi_\infty
\in
E_{(-1,\ldots,-1)}$. We compute that 
\begin{align*}
\sum_{\alpha=1}^{m}
\left\langle
P^\alpha,X^\alpha(\psi_\infty)
\right\rangle
&=
\sum_{\alpha=1}^{m}
\sum_{i=1}^{n}
P_{\alpha i}X_i^\alpha(\psi_\infty) \\
&=
\left(
\sum_{\alpha,i}
P_{\alpha i}e_i\eta_\alpha\psi_\infty,
\psi_\infty
\right)\\
&=
\sum_{s=1}^{m}
\sigma_s(P)
\bigl(
G_s\psi_\infty,\psi_\infty
\bigr) \\
&=
\sum_{s=1}^{m}
\sigma_s(P)
\bigl(
-\psi_\infty,\psi_\infty
\bigr) \\
&=
-\sum_{s=1}^{m}
\sigma_s(P)|\psi_\infty|^2 \\
&=
-\lVert P\rVert_{\mathrm{tr}}
N(\psi_\infty).
\end{align*}
This proves the lemma.
\end{proof}

\begin{proof}[Proof of Theorem~\ref{thm:main-boundary}]
  Choose a nonzero constant spinor $\psi^\infty\neq0$ as in 
  Lemma~\ref{lem:asymptotic-spinor} and let \(\psi=\psi_0+\chi\) be the corresponding Witten spinor
constructed in Corollary~\ref{cor:witten-spinor}. 
By Lemma
  \ref{lem:energy-asymptotics}, we have 
  \begin{align*}
\frac{(n-1)\omega_{n-1}}2
\left[
E N(\psi^\infty)
+
\sum_{\alpha=1}^{m}
\left\langle
P^\alpha,X^\alpha(\psi^\infty)
\right\rangle
\right]
&=
\int_M
\left[
|\overline\nabla\psi|^2
+
\frac12
\left(
\mu|\psi|^2
+
\sum_{\alpha=1}^{m}
\left\langle
J^\alpha,X^\alpha(\psi)
\right\rangle
\right)
\right]dV_g\\
&\quad+
\int_\Sigma
\mathcal B_{\Sigma,T}(\psi)\,dA.
\end{align*}.  
The first term is nonnegative pointwise by Lemma~\ref{lem:trace-norm-clifford} and the dominant energy condition. The 
second expression is nonnegative by
  Corollary~\ref{cor:boundary-positive} and the adapted trapping condition. Therefore
  \[
    \frac{(n-1)\omega_{n-1}}2
      \left[
       E N(\psi^\infty)
       +\sum_\alpha
         \langle P^\alpha,X^\alpha(\psi^\infty)\rangle
      \right]\geq0.
  \]
  Substituting of \eqref{eq:momentum-saturation} gives
\begin{align*}
\frac{(n-1)\omega_{n-1}}2
\left[
E N(\psi^\infty)
+
\sum_{\alpha=1}^{m}
\left\langle
P^\alpha,X^\alpha(\psi^\infty)
\right\rangle
\right]&=\frac{(n-1)\omega_{n-1}}2
\left(
E-\|P\|_{\mathrm{tr}}
\right)
N(\psi^\infty)\\
&\ge0.
\end{align*}
  Since $N(\psi^\infty)>0$, it follows that $E\geq\|\mathcal P\|_{\mathrm{tr}}$ as desired. 
\end{proof}
\newpage

\bibliographystyle{plain}
\bibliography{bibliography}

\end{document}